\newtheorem{lemm}{Lemma}[section]
\newtheorem{prop}[lemm]{Proposition}
\newtheorem{defi}[lemm]{Definition}
\newtheorem{coro}[lemm]{Corollary}
\newtheorem{rema}[lemm]{Remark}
\newtheorem{theo}{Theorem}
\newcommand{\C}{\mathbb{C}}
\newcommand{\Bir}{\mathrm{Bir}}
\newcommand{\Aut}{\mathrm{Aut}}
\newcommand{\Sym}{\mathrm{Sym}}
\newcommand{\A}{\mathbb{A}}
\newcommand{\p}{\mathbb{P}}
\newcommand{\ZZ}{\mathbb{Z}}
\newcommand{\SL}{\mathrm{SL}}
\renewcommand{\div}{\mathrm{div}}
\newcommand{\Symp}{\mathit{Symp}}
\title[Symplectic birational transformations of the plane]{Symplectic birational transformations of the plane}
\author{J\'er\'emy Blanc}
\email{Jeremy.Blanc@unibas.ch}
\thanks{Supported by the SNSF grant no PP00P2\_128422 /1}
\begin{document}
\begin{abstract}{We study the group of symplectic birational transformations of the plane. It is proved that this group is generated by $\mathrm{SL}(2,\mathbb{Z})$, the torus and a special map of order $5$, as it was conjectured by A. Usnich.

Then we consider a special subgroup $H$, of finite type, defined over any field  which admits a surjective morphism to the Thompson group of piecewise linear automorphisms of $\mathbb{Z}^2$. 
We prove that the presentation for this group conjectured by Usnich is correct.}\end{abstract}
\maketitle

\section{Introduction}
\subsection{The group $\Symp$}
Recall that a rational map $f\colon \C^2\dasharrow \C^2$ -- or a \emph{rational transformation} of $\C^2$ -- is given by $$(x,y)\dasharrow (f_1(x,y),f_2(x,y)),$$ where $f_1,f_2$ are two rational functions (quotients of polynomials) in two variables. The map $f$ is said to be \emph{birational} if it admits a inverse of the same type, which is equivalent to say that $f$ is locally bijective, or that $f$ induces an isomorphism between two open dense subsets of $\C^2$. The group of birational maps of $\C^2$ is the famous \emph{Cremona group}.

Following \cite{bib:Usnich}, we define $\Symp$ as the group of symplectic birational transformations of the plane, which is the group of birational transformations of $\C^2$ which preserve the differential form
$$\omega_0=\frac{dx\wedge dy}{xy}.$$

In \cite{bib:Usnich}, a natural surjective morphism from $\Symp$ to the Thompson group of piecewise linear automorphisms of $\mathbb{Z}^2$ is constructed (see also \cite{bib:Fav}) although the Thompson group does not embedd in the Cremona group. The group $\Symp$, related to other topics of mathematics, is also an interesting subgroup of the Cremona group, from the geometric point of view. The base-points of its elements are poles of the differential form $\omega_0$, but its elements can contract curves which are not poles of $\omega_0$. In this article, we describe the geometry of elements of $\Symp$, and give proofs to two conjectures of \cite{bib:Usnich} (Theorems~\ref{Thm:Generators} and \ref{Thm:relations} below).

\subsection{The results}

The two groups $\SL(2,\ZZ)$ and $(\C^{*})^2$ naturally embedds into $\Symp$; the matrix
$\left(\begin{array}{cc}a & b \\ c & d\end{array}\right)\in \SL(2,\ZZ)$ corresponds to the map $(x,y)\dasharrow (x^ay^b,x^cy^d)$, and the pair $(\alpha,\beta)\in (\C^{*})^2$ corresponds to $(x,y)\dasharrow (\alpha x,\beta y)$. Moreover, the map $P\colon (x,y)\dasharrow (y,\frac{y+1}{x})$, of order $5$, is also an element of $\Symp$. Our first main result consists of proving the following result, conjectured in \cite{bib:Usnich}:
\begin{theo}\label{Thm:Generators}
The group $\Symp$ is generated by $\SL(2,\ZZ)$, $(\C^{*})^2$ and $P$.
\end{theo}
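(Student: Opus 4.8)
The plan is to prove a symplectic analogue of the Noether--Castelnuovo theorem: first extend $\omega_0$ to a rational $2$-form on $\Pn$, whose divisor is then $-T$ with $T=\{xyz=0\}$ the coordinate triangle (an anticanonical cycle); then normalise the base points of an arbitrary $f\in\Symp$; and finally lower the degree of $f$ by composing with conjugates of $P$ and of monomial maps. The crucial preliminary lemma is that for every $f\in\Symp$ all base points of $f$ and of $f^{-1}$, infinitely near ones included, lie over $T$, and that $f$ admits a resolution $\pi,\eta\colon X\to\Pn$ with $\div_X(\omega_0)=-D$ for a cycle $D$ of rational curves, no zero of $\omega_0$ ever appearing. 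The mechanism is that $f^{*}\omega_0=\omega_0$ forbids zeros: a curve $C$ contracted by $f$ to a point off $T$ would make the differential of $f$ drop rank along $C$, hence $f^{*}\omega_0$ vanish along $C$, impossible since $\div(\omega_0)=-T\le 0$; thus every curve contracted by $f$ (or $f^{-1}$) maps into $T$, which pins the proper base points onto $T$, and iterating over the blow-ups of $T$ -- where $\omega_0$ keeps a simple pole along the strict transform of $T$ and would acquire a zero only if one blew up a point off that strict transform, which is therefore excluded -- forces the infinitely near base points over $T$ too (over a smooth point of $T$ they even form a chain along the relevant line). Combined with Noether's equalities, this also bounds the multiplicities of the base points lying over smooth points of $T$.

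The toric case is then immediate: if all base points of $f\in\Symp$ lie over the three vertices of $T$ (in particular if $\deg f=1$, where there are none), the contracted curves are coordinate lines, so $f$ restricts to an automorphism of the open torus $(\C^{*})^2$ and hence lies in $\GL(2,\ZZ)\ltimes(\C^{*})^2$; since $f^{*}\omega_0=\det(f)\,\omega_0$ for such a map, $\det(f)=1$, so $f\in\SL(2,\ZZ)\ltimes(\C^{*})^2=\langle\SL(2,\ZZ),(\C^{*})^2\rangle$.

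For the general case one inducts on $d=\deg f$. If $d>1$ and $f$ has a base point over a smooth point of $T$, I would use the room available in $\langle\SL(2,\ZZ),(\C^{*})^2\rangle$ -- the torus moves the smooth points of each line of $T$ transitively, and a cyclic subgroup of order $3$ of linear symplectic maps permutes the three lines -- to bring a suitable smooth base point of $f$, together with two vertex base points, onto the base locus of a conjugate $g$ of $P$ or of $P^{-1}$ (both quadratic, each with, in suitable coordinates, two base points at adjacent vertices and one simple base point at a smooth point of an adjacent line). Since $\deg g=2$, one has $\deg(f\circ g)=2d-\sum_{q}\mathrm{mult}_{q}(\Lambda_f)$, the sum running over the three base points $q$ of $g^{-1}$, and a Noether-type inequality for the homaloidal type of $f$ lets one arrange this sum to exceed $d$, so that $\deg(f\circ g)<d$. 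When instead the heaviest base points sit over vertices, the degree is lowered by composing with a suitable monomial map in $\langle\SL(2,\ZZ),(\C^{*})^2\rangle$, a toric transformation at the corresponding corner. Either way $f\circ g$ has smaller degree, hence lies in $\langle\SL(2,\ZZ),(\C^{*})^2,P\rangle$ by induction, and so does $f=(f\circ g)\,g^{-1}$, proving Theorem~\ref{Thm:Generators}.

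The main obstacle is this last step -- the analogue of the delicate part of Noether--Castelnuovo, made heavier by the symplectic data. One must show that for every $f\in\Symp$ of degree $>1$ at least one of the two reductions genuinely applies: that the anticanonical constraint forces the configuration of base points on $T$ to contain a pattern matching a conjugate of $P^{\pm1}$ or of a monomial map with large enough multiplicities, and that the rigid group $\SL(2,\ZZ)\ltimes(\C^{*})^2$ -- which fixes the vertices and only slides smooth points along their lines and rotates the triangle -- still has enough freedom to align such a generator while controlling the infinitely near points through the composition. The surjection from $\Symp$ onto the Thompson group of piecewise-linear automorphisms of $\ZZ^2$, together with its combinatorics, can be used to organise this bookkeeping.
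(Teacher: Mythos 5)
Your preliminary lemma (all base-points of $f$ and $f^{-1}$, including infinitely near ones, are poles of $\omega_0$) is sound and matches the paper's Proposition~\ref{prop:StdcubicBasePts}. The gap is in the degree-lowering step, which you yourself flag as ``the main obstacle'' but do not close, and it is precisely where the real work lies. Noether's inequality gives you three base-points of $f$ whose multiplicities sum to more than $d$, and they all lie over the triangle; but nothing forces them to be \emph{proper} points of $\p^2$ in the configuration (two adjacent vertices plus one smooth point of an adjacent line) that matches a torus/rotation conjugate of $P^{\pm1}$, nor to sit at the vertices so that a monomial map applies. Symplectic maps genuinely have infinitely near base-points that are neither toric nor on the triangle's smooth locus in the naive sense: the paper's own list contains $PI^2$ with base-points $p_1,p_2,p_1^{Y+Z}$ and $PIP$ with base-points $p_1,q_1,q_1^X$, and higher-degree compositions pile up such points. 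For these, neither of your two reduction moves is available, so the induction on $\deg f$ stalls. Your ``toric case'' also contains a false claim along the same fault line: having all base-points over the vertices does \emph{not} make $f$ monomial ($PI^2$ is a counterexample; it contracts the non-coordinate line $Y+Z=0$ onto $q_2$), so you cannot conclude $f\in\SL(2,\ZZ)\ltimes(\C^*)^2$ in that case either. Appealing to the surjection onto the Thompson group to ``organise the bookkeeping'' does not substitute for an argument, since that morphism kills exactly the information (infinitely near structure, multiplicities) you need to control.

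For comparison, the paper circumvents this difficulty with a different architecture: it first decomposes any symplectic map into quadratic maps that at each step send $\omega_0$ to a \emph{normal cubic form} (Lemma~\ref{Lemm:DecompStup}, a de Jonqui\`eres/Noether--Castelnuovo argument where the intermediate anticanonical divisor is allowed to degenerate to a conic plus a line or a nodal cubic), then corrects the chain so that every intermediate form is again $\omega_0$ itself (Proposition~\ref{Prop:SympQuad}, the ``relay'' through auxiliary quadratic maps based on the conic or on the node); it then classifies \emph{all} quadratic symplectic maps with three proper base-points (Lemma~\ref{Lem:QuadProp}, which requires not only powers of $P$ but also the families $S_\lambda,T_\lambda$, explicit words in $P$, $C$ and the torus), and finally disposes of quadratic factors with infinitely near base-points by composing with an auxiliary quadratic map and running a two-parameter induction on (number of factors with improper base-points, number of factors). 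If you want to salvage your direct induction on the degree, you would have to prove an analogue of these last two steps, i.e.\ show how to trade infinitely near base-points for proper ones while staying inside the group generated by $\SL(2,\ZZ)$, $(\C^*)^2$ and $P$; as written, the proposal does not do this.
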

The map $P$ is a well-known linearisable map (\cite{bib:BaB}), and the group $<\SL(2,\ZZ),(\C^{*})^2>$ is a toric well-understood group. The mix of this group with $P$ provides all the complexity to $\Symp$. In the proof, the reader can see that all non-toric base-points come from $P$, but in fact, there are many relations in $\Symp$, and we can have complicated elements with many non-toric base-points.

However, the natural subgroup $H\subset \Symp$ generated by $\SL(2,\ZZ)$ and $P$ is easier to understand. It is an interesting  subgroup of finite type of the Cremona group, which is moreover defined over $\mathbb{Q}$ or over any field. We write $C,I$ the elements
$C=\left(\begin{array}{cc}-1 &  1\\
-1&  0\end{array}\right)$ and $I=\left(\begin{array}{cc}0 &  -1\\
1&  0\end{array}\right)$ of $\SL(2,\ZZ)$. The presentation

$$\SL(2,\ZZ)=<I,C\ |\ I^4=C^3=[C,I^2]=1>$$
is  classical.  We now prove the following result on the relations of $H$, conjectured in \cite{bib:Usnich}:
\begin{theo}\label{Thm:relations}
The following is a presentation of the group $H$:
$$H=<I,C,P\ |\ I^4=C^3=[C,I^2]=P^5=1, PCP=I>.$$
\end{theo}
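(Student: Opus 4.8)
The goal is to prove that the natural homomorphism $\Phi\colon\widehat H\to H$ is an isomorphism, where $\widehat H$ denotes the group defined by the presentation in the statement. That $\Phi$ is well defined and surjective is immediate: $I$ and $C$ are the monomial maps attached to matrices generating $\SL(2,\ZZ)$, so $\langle I,C\rangle\subseteq\Symp$ is a copy of $\SL(2,\ZZ)$ and the first three relations hold; $P^5=1$ and $PCP=I$ are the explicit identities between birational maps recalled before (the latter a direct computation with the formulas); and $H$ is generated by the images of $I,C,P$ by definition. Everything thus reduces to the injectivity of $\Phi$.

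I would deduce injectivity from the surjective morphism $\tau\colon\Symp\to T$ onto the Thompson group $T=\mathrm{PL}(\ZZ^2)$. Composing and restricting produces a commutative diagram of short exact sequences
$$1\longrightarrow\widehat N\longrightarrow\widehat H\overset{\tau\Phi}{\longrightarrow} T\longrightarrow 1,\qquad 1\longrightarrow N\longrightarrow H\overset{\tau}{\longrightarrow} T\longrightarrow 1,$$
with $\widehat N=\ker(\tau\Phi)$, $N=\ker(\tau|_H)$, $\Phi(\widehat N)=N$, and $\ker\Phi\subseteq\widehat N$ (as $\tau\Phi$ factors through $\Phi$). Hence $\Phi$ is injective if and only if it restricts to an isomorphism $\widehat N\overset{\sim}{\to} N$: the bulky Thompson part has been divided out, and only the comparatively small group $\widehat N$ remains to be controlled. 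The first sub-task is to identify $N$ inside $H$ -- using the description of base-points and contracted curves of elements of $\Symp$ from the proof of Theorem~\ref{Thm:Generators}, one determines which words in $I,C,P$ become trivial after tropicalisation -- so that $N$ (possibly trivial, if $\tau|_H$ is already injective) comes with an explicit finite generating set written in $I,C,P$.

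The main step is then to show that no relation beyond the five listed suffices to kill these words. For this I would use that $\widehat H$ is the quotient of the free product $\SL(2,\ZZ)\ast\langle P\rangle$, $\langle P\rangle\cong\Z{5}$, by the single relator $PCPI^{-1}$: the identity $PCP=I$ yields the rewriting rules $PC\leftrightarrow IP^{-1}$ and $CP\leftrightarrow P^{-1}I$ on alternating words which, together with the relations of $\SL(2,\ZZ)$ and $P^5=1$, should produce a normal form for elements of $\widehat H$. One must then check that distinct normal forms stay distinct in $H$, i.e. in $\Symp$: a reduced word with nontrivial $P$-syllables realises a birational map with genuine non-toric base-points (those "coming from $P$"), hence cannot be a monomial map, and tracking this along the normal form forces any word equal to $1$ in $H$ to reduce to the empty word using only the given relations. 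Specialising the normal-form analysis to $\widehat N$ yields $\widehat N\cong N$, hence the theorem.

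The part I expect to be genuinely hard is exactly the construction and faithfulness of this normal form: proving that the rewriting system coming from $PCP=I$ is confluent, with no hidden collapses, and matching each reduction with the behaviour of base-points under composition in $\Symp$, so that the only coincidences among words in $I,C,P$ holding for birational maps are those predicted by the five relations. The group-theoretic bookkeeping with $\SL(2,\ZZ)$ and $\Z{5}$ is routine once the two geometric inputs -- the classification of elements of $\Symp$ and the exact kernel of $\tau$ -- are available.
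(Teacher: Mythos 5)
Your reduction to injectivity of $\Phi\colon\widehat H\to H$ is the right first step, but after that the argument has two genuine gaps, and the second one essentially restates the theorem rather than proving it. First, the passage through the Thompson group buys you nothing that you can actually cash: the morphism $\tau\colon\Symp\to T$ is only quoted from Usnich, and nothing in this paper (nor anything you indicate) computes $N=\ker(\tau|_H)$, let alone gives it ``an explicit finite generating set written in $I,C,P$''; since $T$ is a simple quotient of a group that the theorem says is $\widehat H$, this kernel is a large, complicated normal subgroup, and identifying it is not easier than the original problem. Second, and more seriously, your key step --- a normal form for $\widehat H$ coming from the rewriting rules $PC\leftrightarrow IP^{-1}$, $CP\leftrightarrow P^{-1}I$, plus the claim that ``a reduced word with nontrivial $P$-syllables realises a birational map with genuine non-toric base-points, hence cannot be a monomial map'' --- is exactly the content of the injectivity you are trying to prove, asserted rather than established. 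The group $\widehat H$ is a one-relator quotient of $\SL(2,\ZZ)\ast\Z{5}$, not an amalgam, and there is no reason the naive rewriting system is confluent; moreover the paper itself warns that $\Symp$ is full of unexpected relations and collapses (elements with many non-toric base-points, quadratic maps such as $S_\lambda$, $T_\lambda$ arising from conjugates of $P^2$), so the faithfulness of any syllable-counting normal form would require precisely the kind of base-point bookkeeping you defer as ``genuinely hard''. As it stands, nothing in the proposal rules out a word, reduced for your rewriting rules, whose base-points cancel along the composition and which represents a linear map not visibly equal to a linear word modulo $R$.

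For comparison, the paper avoids both issues and never uses $T$: it works directly with words in $I,C,P$ and runs a Noether--Castelnuovo-type induction. It introduces twelve explicit quadratic words $Q_1,\dots,Q_{12}$ (with prescribed base-points among $p_i$, $q_i$ and their infinitely near points), proves closure properties (Lemma~\ref{QuadWinv}, Lemma~\ref{LemmIiP}), then the key Proposition~\ref{fgmKeyProp}: if two quadratic words share two (resp.\ three) base-points, their ``quotient'' is, modulo $R$, again a quadratic (resp.\ linear) word. Given any word $W$ representing the identity, it tracks the degrees $d_i$ of the intermediate linear systems $\Lambda_i$, and by choosing auxiliary quadratic words $Q$ through well-chosen triples of base-points of high multiplicity (Lemma~\ref{Lemm:ExistenceQQ} guarantees such $Q$ exist inside $H$) it strictly decreases the pair $(D,n)$ in Proposition~\ref{Prp:Words}, until $W$ is rewritten, using only the relations $R$, as a linear word, hence as $1$. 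That degree-descent on linear systems is the mechanism that replaces your hoped-for normal form, and it is where all the geometric input (multiplicities, collinearity of $q_1,q_2,q_3$, the triangle condition $(\diamond)$) actually gets used. To salvage your approach you would have to supply a proof of confluence and of the faithfulness claim, which is not sketched and which I do not see how to obtain without redoing something equivalent to the paper's induction.
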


The author thanks S. Galkin for asking him these questions in the Workshop on the Cremona group organised by I. Cheltsov in Edinburgh in the beginning of $2010$.
%
%
%
%
%
%

\section{Some reminders on birational transformations}
Recall that any birational transformation of $\C^2$ extends to an unique birational transformation of the projective complex plane $\p^2$ (written also $\p^2_\C$ or $\C\p^2$) via the embedding $(x,y)\mapsto (x:y:1)$. We will take $X,Y,Z$ as homogeneous coordinates on $\p^2$, so that the afined coordinates $x,y$ on $\C^2$ correspond to $x=X/Z$ and $y=Y/Z$. Any birational transformation $\varphi$ of $\p^2$ can be written as
$$\varphi\colon(X:Y:Z)\dasharrow (P_1(X,Y,Z):P_2(X,Y,Z):P_3(X,Y,Z)),$$
where the $P_i$ are homogeneous polynomials of the same degree without common factor. The degree of the map is the degree of the $P_i$. If this one is $>1$, then there is a finite number of points of  $\p^2$ where $\varphi$ is not defined, which corresponds to the set of common zeros of $P_1,P_2,P_3$. 

More generally, the base-points of $\varphi$ are the points where all curves of the linear system $\sum \lambda_i P_i, \lambda_i\in \C$ pass through. Note that these points are not necessarily on $\p^2$, but maybe in some blow-up, and correspond thus to some tangent directions. See for example \cite{bib:AC} for more details.

\section{Normal cubic forms and geometric descriptions}

Note that the divisor corresponding to $\omega_0$ on $\p^2$ is $-(X)-(Y)-(Z)$.

\begin{defi}We say that a differential form $\omega$ is a \emph{normal cubic form} if $-\div(\omega)$ is the divisor of a $($possibly reducible$)$ singular cubic, whose singular points are ordinary double points. 
\end{defi}

Note that in the above definition, $-\div(\omega)$ can be either (i) the union of three lines with exactly three double points, (ii) the union of a smooth conic and a line intersecting into two distinct points, (iii) an irreducible cubic curve having an unique ordinary double point.
The form $\omega_0$ is a normal cubic form of type (i).

Before using the above definition, we remind the reader the following simple result, already observed in \cite{bib:Usnich}.
\begin{lemm}\label{Lemm:MultZeros}
Let $\omega$ be a differential form on a smooth algebraic surface $S$ and let $\eta\colon \hat{S}\to S$ be the blow-up  of $q\in S$.  We write $D=\div(\omega)$ the divisor of $\omega$, $\tilde{D}$ its strict transform on $\hat{S}$, and $E$ the exceptional curve contracted by $\eta$.

Then $\div(\eta^{*}(\omega))=\tilde{D}+(m+1)E$, where $m\in \mathbb{Z}$ is the multiplicity of $D$ at $q$. In particular, 

\begin{enumerate}
\item
$E$ is a zero of $\div(\eta^{*}(\omega))$ $\Leftrightarrow$ $D$ has multiplicity $\ge 0$ at $q$.
\item
$E$ is a pole of $\div(\eta^{*}(\omega))$ $\Leftrightarrow$ $D$ has multiplicity $\le -2$ at $q$;
\item
$E$ is a pole of multiplicity one of $\div(\eta^{*}(\omega))$ $\Leftrightarrow$ $D$ has multiplicity $-2$ at $q$.
\end{enumerate}
\end{lemm}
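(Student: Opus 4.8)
The plan is to reduce the whole statement to a local computation around $q$, since both sides of the claimed equality $\div(\eta^{*}(\omega))=\tilde{D}+(m+1)E$ are divisors on $\hat{S}$ and $\eta$ is an isomorphism away from $E$. First I would fix local coordinates $(u,v)$ on $S$ centred at $q$; as $S$ is a smooth surface and $\omega$ is a rational $2$-form, I can write $\omega=f(u,v)\,du\wedge dv$ near $q$ with $f$ a rational function, so that $\div(\omega)$ agrees locally with $\div(f)$. The integer $m$, the multiplicity of $D=\div(\omega)$ at $q$, is then the signed order of $f$ at $q$: writing $f=g/h$ with $g,h$ regular at $q$, one has $m=\ord_q(g)-\ord_q(h)$, where $\ord_q$ records the lowest degree appearing in the Taylor expansion at $q$.

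Next I would write the blow-up in one of its two standard charts, say $u=u_1$, $v=u_1v_1$, so that $E=\{u_1=0\}$ in this chart (the other chart $u=u_2v_2$, $v=v_2$ being symmetric). The key computation is the transformation of the volume form: $\eta^{*}(du\wedge dv)=du_1\wedge(v_1\,du_1+u_1\,dv_1)=u_1\,du_1\wedge dv_1$, hence $\eta^{*}(\omega)=u_1\cdot f(u_1,u_1v_1)\,du_1\wedge dv_1$. It remains to read off the order of vanishing of this expression along $E$: the Jacobian factor $u_1$ contributes $1$, and $f(u_1,u_1v_1)$ contributes exactly $m$. Indeed, if $g=g_d+g_{d+1}+\cdots$ is the decomposition of the numerator into homogeneous parts with $g_d\neq 0$ and $d=\ord_q(g)$, then $g(u_1,u_1v_1)=u_1^{d}\bigl(g_d(1,v_1)+u_1g_{d+1}(1,v_1)+\cdots\bigr)$, and $g_d(1,v_1)$ is not identically zero because $g_d$ is a nonzero homogeneous polynomial; so $g\circ\eta$ vanishes along $E$ to order exactly $d$, and similarly for $h$. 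Therefore the coefficient of $E$ in $\div(\eta^{*}(\omega))$ equals $1+(\ord_q(g)-\ord_q(h))=m+1$, while away from $E$ the remaining part of $\div(\eta^{*}(\omega))$ is the strict transform $\tilde{D}$. This establishes the displayed formula.

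The three itemized equivalences are then immediate from $\mathrm{coeff}_E(\div(\eta^{*}(\omega)))=m+1$ and the integrality of $m$: $E$ is a zero iff $m+1\ge 1$ iff $m\ge 0$; $E$ is a pole iff $m+1\le -1$ iff $m\le -2$; and $E$ is a pole of multiplicity one iff $m+1=-1$ iff $m=-2$.

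The only point needing care — the ``main obstacle'', such as it is — is the bookkeeping of orders when $D$ has several branches through $q$, possibly with some of them poles of $\omega$ (negative multiplicity): one must interpret ``multiplicity of $D$ at $q$'' as the signed order $\ord_q(g)-\ord_q(h)$ of the local expression $f=g/h$, and check that the leading-form argument applies verbatim to numerator and denominator separately. Once this is set up correctly, the computation is purely formal and chart-independent, so the lemma follows.
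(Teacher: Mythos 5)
Your proof is correct and follows essentially the same route as the paper: a local computation in a chart of the blow-up, where the Jacobian of the blow-up contributes $1$ to the coefficient of $E$ and the pulled-back local equation of $D$ contributes exactly $m$, the rest of the divisor being the strict transform since $\eta$ is an isomorphism off $E$. Your splitting of the local function into numerator and denominator with the leading-form argument is just a slightly more explicit version of the paper's factorization $\varphi(uv,v)=v^{m}\psi(u,v)$.
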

\begin{proof}
Let us take some local coordinates $u,v$ on $S$ at $q$ so that this point corresponds to $u=v=0$.
The form $\omega$ locally corresponds to $\varphi(u,v)\cdot du \wedge dv$, where $\varphi$ is a rational function in two variables, and $D$ corresponds to $(\varphi(u,v))$.

The blow-up can be viewed locally as $(u,v)\mapsto (uv,v)$, and  $\eta^{*}(\omega)$  becomes $\varphi(uv,v)\cdot d(uv) \wedge dv=\varphi(uv,v)v\cdot du \wedge dv$. In these coordinates, $v$ is the equation of the divisor $E$ and $\varphi(uv,v)$ corresponds to $\eta^{*}(\div(\omega))$. Moreover $\varphi(uv,v)=v^m \cdot \psi(u,v)$, where $m\in \mathbb{Z}$ is the multiplicity of $D$ at $q$ (which is the multiplicity of $\varphi$ at $(0,0)$), and where $\psi(0,0)\in \C^{*}$. Observing that $\psi(u,v)$ correspons to $\tilde{D}$, we obtain the result. \end{proof}
We can now relate the base points of birational maps to the image of normal cubic forms:

\begin{prop}\label{prop:StdcubicBasePts}
Let $\varphi\colon \p^2\dasharrow \p^2$ be a birational map, and let $\omega$ be a normal cubic form. The following are equivalent:

\begin{enumerate}
\item
All base-points of $\varphi$ are poles of $\omega$;
\item
The form $\varphi_{*}(\omega)$ is a normal cubic form.
\end{enumerate}
\end{prop}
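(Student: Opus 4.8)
The plan is to resolve $\varphi$ on a smooth surface. Let $\pi\colon S\to\Pn$ be the blow-up of the base-points of $\varphi$, realised as a minimal composition $S=S_n\to S_{n-1}\to\cdots\to S_0=\Pn$ with the $i$-th blow-up centred at the $i$-th base-point $q_i\in S_{i-1}$, with exceptional curve $E_i$; set $\sigma=\varphi\circ\pi\colon S\to\Pn$, which is then also a morphism. Since pull-back of rational $2$-forms along the birational morphisms $\pi,\sigma$ is compatible with $\varphi_*$, one has $\varphi_*\omega=\sigma_*(\pi^*\omega)$ and $\pi^*\omega=\sigma^*(\varphi_*\omega)$, and $\div(\varphi_*\omega)=\sigma_*\bigl(\div(\pi^*\omega)\bigr)$, where $\sigma_*$ of a divisor simply forgets the $\sigma$-exceptional components. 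The engine is Lemma~\ref{Lemm:MultZeros} applied at each blow-up: writing $\omega_i$ for the pull-back of $\omega$ to $S_i$, $C_i=-\div(\omega_i)$, and $\nu_i$ for the multiplicity of $C_{i-1}$ at $q_i$, the lemma gives $C_i=\widetilde{C_{i-1}}+(\nu_i-1)E_i$ (tilde $=$ strict transform), hence on $S$
\[ C:=C_n=\widetilde\Gamma+\sum_{i=1}^n(\nu_i-1)E_i, \qquad \div(\pi^*\omega)=-\widetilde\Gamma+\sum_{i=1}^n(1-\nu_i)E_i, \]
where $\Gamma=-\div(\omega)$ is the given nodal cubic. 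Since (granting that the earlier centres are poles) $q_i$ is a pole of $\omega_{i-1}$ exactly when $\nu_i\ge 1$, this already yields the key dictionary: \emph{all base-points of $\varphi$ are poles of $\omega$ $\iff$ every $\nu_i\ge 1$ $\iff$ $C$ is effective $\iff$ $\pi^*\omega$ has no zeros.}

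For $(1)\Rightarrow(2)$ I run two inductions with this formula. \emph{Going up:} under hypothesis (1), $C_0=\Gamma$ is an effective reduced anticanonical curve with only ordinary double points; if $C_{i-1}$ is of this kind then, $q_i$ being a pole, $q_i\in C_{i-1}$ with $\nu_i\in\{1,2\}$, and $C_i=\widetilde{C_{i-1}}$ (if $\nu_i=1$, a smooth point: the singularities are unchanged) or $C_i=\widetilde{C_{i-1}}+E_i$ (if $\nu_i=2$, a node: its two branches now meet $E_i$ transversally at two distinct points, creating exactly two new nodes), so $C_i$ is again effective, reduced, anticanonical, with only ordinary double points. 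Thus $C=-\div(\pi^*\omega)$ is such a curve on $S$. \emph{Going down:} $\sigma$ is a composition of contractions of $(-1)$-curves; contracting a $(-1)$-curve $F$, the current curve stays anticanonical, so $F\cdot(\text{curve})=-K\cdot F=1$, and either $F$ is not a component — then the curve meets $F$ transversally at a single smooth point (a double point cannot be absorbed into a single transverse intersection), giving at worst a smooth point after contraction — or $F$ is a component, the rest meets $F$ with total multiplicity $2$, and ``only ordinary double points'' forbids a tangency (a tacnode) or two branches through one point of $F$ (a triple point), so the two intersection points are distinct and transverse and the contraction produces precisely one new node. Hence $-\div(\varphi_*\omega)=\sigma_*(C)$ is an effective plane cubic with only ordinary double points, i.e.\ $\varphi_*\omega$ is a normal cubic form.

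For $(2)\Rightarrow(1)$ I argue by contraposition. If some base-point of $\varphi$ is not a pole, the dictionary says $\pi^*\omega$ has a zero, and by the displayed formula this zero is supported on $\pi$-exceptional curves (the $E_i$ with $\nu_i=0$). Now use $\pi^*\omega=\sigma^*(\varphi_*\omega)$: if $\varphi_*\omega$ were a normal cubic form it would have no zeros, and then $\div(\sigma^*(\varphi_*\omega))=\sigma^*\div(\varphi_*\omega)+\sum_j\mathcal F_j$ (the $\mathcal F_j$ being the $\sigma$-exceptional total transforms) shows the zeros of $\pi^*\omega$ would be supported on $\sigma$-exceptional curves. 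So a zero of $\pi^*\omega$ would be contracted both by $\pi$ and by $\sigma$; but the minimality of the resolution forbids this — a common exceptional component would, after contracting the curves lying above it, yield a $(-1)$-curve on $S$ contracted by both $\pi$ and $\sigma$, which could be blown down, contradicting minimality (see \cite{bib:AC}). This contradiction shows $\varphi_*\omega$ is not a normal cubic form.

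The step I expect to be the main obstacle is the ``going down'' analysis in $(1)\Rightarrow(2)$: controlling how an effective anticanonical curve with only ordinary double points behaves under the contractions composing $\sigma$, since contracting curves can a priori create cusps, tacnodes, or points of multiplicity $\ge 3$. The ``ordinary double point'' hypothesis — imposed on both $\omega$ and on the conclusion — is exactly what excludes these, but the bookkeeping is delicate, especially when components of $C$ are themselves among the curves that $\sigma$ contracts; this is where most of the care will go. The auxiliary fact about minimal resolutions used in $(2)\Rightarrow(1)$ is standard and can be quoted or checked in a couple of lines.
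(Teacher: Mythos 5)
Your proof is correct, and for the main implication it takes a genuinely different route from the paper. The paper's strategy for $(1)\Rightarrow(2)$ is to show that $-\div(\eta^{*}(\omega))$ on the resolution is a \emph{loop of smooth rational curves} (after enlarging the resolution so that the node of an irreducible $\Gamma$ is blown up), push this anticanonical divisor down to get an effective cubic which cannot be smooth because its components are rational, and then rule out singularities worse than nodes \emph{indirectly}: if $\varphi_{*}(\omega)$ had such a singularity, pulling it back along $\epsilon=\varphi\eta$ and applying Lemma~\ref{Lemm:MultZeros} again would show the divisor upstairs is not a loop. You instead take ``reduced anticanonical curve with only ordinary double points'' as the invariant and verify directly that it is preserved both going up (blow-ups at pole centres, where the multiplicity is forced to be $1$ or $2$) and going down (the contractions composing $\sigma$), the key being $C\cdot F=-K\cdot F=1$ for each contracted $(-1)$-curve $F$, which leaves exactly the two benign local configurations you describe. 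Your approach avoids modifying the resolution and gives explicit downstairs control of the singularities; the paper's loop argument avoids your case analysis of contractions at the price of the extra blow-up and the pull-back trick. Your $(2)\Rightarrow(1)$ is essentially the paper's argument (exceptional curves over a non-pole centre are zeros of $\pi^{*}\omega$, and minimality prevents them all from being $\sigma$-exceptional), phrased through $\pi^{*}\omega=\sigma^{*}(\varphi_{*}\omega)$; if anything you justify the ``not all contracted'' step more explicitly than the paper does.

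One small point to add at the end of $(1)\Rightarrow(2)$: the definition of a normal cubic form requires the cubic to be \emph{singular}, and ``only ordinary double points'' alone does not exclude a smooth cubic. This is a one-line fix, either as in the paper (all components of $C$ are rational, so $\sigma_{*}(C)$ cannot be a smooth cubic, which has genus $1$), or by observing from your own bookkeeping that the number of nodes never drops to zero: it starts at $\ge 1$ for $\Gamma$, does not decrease going up, and going down the only decrease is the merging of the two nodes on a contracted component $F$ into one new node.
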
 
\begin{proof}
If $\varphi$ has no base-point, both assertions are trivially true, so we may assume that $\varphi$ has at least one base-point.

We denote by $\eta\colon S\to \p^2$ the blow-up of all base-points of $\phi$, and by $\epsilon\colon S\to \p^2$ the morphism $\phi \eta$, which is the blow-up of all base-points of $\phi^{-1}$.

Suppose first that at least one base-point $q$ of $\phi$ (which may be infinitely near to $\p^2$) is not a pole of $\omega$. By Lemma~\ref{Lemm:MultZeros}, the exceptional curve of this point, and of all infinitely near points, are zeros of $\eta^{*}(\omega)$. Since $q$ is a base-point, at least one of these curves is not contracted by $\eta$, and thus $\varphi_{*}(\omega)=\epsilon_{*}(\eta^{*}(\omega))$ has zeros; it is therefore not a normal cubic form.

Suppose now that all base-points of $\varphi$ are poles of $\omega$. If $-\div(\omega)$ is an irreducible cubic curve, it has an unique ordinary double point,  we assume that $\eta$ blows-up this point, by replacing $\eta$ by its composition with the blow-up if needed, obtaining another (non-minimal) resolution of $\varphi$. We now prove the following assertion:

{\it The divisor $D_S=-\div(\eta^{*}(\omega))$ is linearly equivalent to $-K_S$ and is an effective reduced divisor consisting of a loop of smooth rational curves $($i.e.\ a finite number of smooth rational curves where each one intersect exactly two others, and  each intersection is transversal$)$.} 

Firstly, since $\div(\omega)$ is linearly equivalent to $K_{\p^2}$, we obtain that $D_S$ is equivalent to $-K_S$ by applying Lemma~\ref{Lemm:MultZeros}. Secondly, we recall that $-\div(\omega)$ is an effective divisor, and that it is either a loop of smooth rational curves or an irreducible nodal cubic curve. In this latter case, writing $\mu\colon \mathbb{F}_1\to \p^2$ the blow-up of the singular point, $-\div(\mu^{*}(\omega))$ is the union of the exceptional curve with the strict transform of the cubic, and is thus a loop of smooth rational curves. We proceed then by induction on the number of points blown-up by $\eta$, applying Lemma~\ref{Lemm:MultZeros} at each step; blowing-up a smooth point on a loop does not change the structure of the loop, and blowing-up a singular point only adds one component. The assertion is now clear.

The fact that $D_S$ is an effective divisor linearly equivalent to $-K_S$ implies that $D=-\div(\varphi_{*}(\omega))=-\div(\epsilon_{*}(\eta^{*}(\omega)))=\epsilon_{*}(D_S)$ is an effective divisor linearly equivalent to $-K_{\p^2}$, and is thus a cubic curve. All components of $D_S$ being rational, $D$ cannot be smooth. It remains to see that all singular points of $D$ are ordinary double points. Writing $\omega'=\varphi_{*}(\omega)$, if $D=-\div(\omega')$ had one other singularities, we can check using  Lemma~\ref{Lemm:MultZeros} that $D_S=-\div(\epsilon^{*}(\omega'))$ would not be a loop.\end{proof}
\section{Decomposition into quadratic maps}
It is well known that any birational transformation of the plane decomposes into quadratics maps. Using Proposition \ref{prop:StdcubicBasePts}, we can deduce the same for elements which send a normal cubic form on another one (Lemma~\ref{Lemm:DecompStup}), and then with a more careful study to elements which preserve the divisor of $\omega_0$ (Proposition~\ref{Prop:SympQuad}).

\begin{lemm} \label{Lemm:DecompStup}
Let $\varphi\colon \p^2\dasharrow \p^2$ be a birational map of degree $d>1$, and let $\omega$ be a normal cubic form. If $\varphi_{*}(\omega)$ is a normal cubic form, there exist quadratic transformations $\phi_1,\dots,\phi_n$ such that

\begin{enumerate}
\item
$\varphi=\phi_n\circ \dots \circ \phi_1$;
\item
for $i=1,\dots,n$, $(\phi_i\circ \dots \circ \phi_1)_{*}(\omega)$ is a normal cubic form.
\end{enumerate}
\end{lemm}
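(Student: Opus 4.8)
The plan is to mimic the classical proof that every birational map of $\p^2$ factors into quadratic transformations (via the Noether--Castelnuovo type argument, or the variant using de Jonquières maps and induction on degree), while keeping track of the normal cubic form along the way. The key new ingredient available to us is Proposition~\ref{prop:StdcubicBasePts}: a birational map sends a normal cubic form to a normal cubic form if and only if all of its base-points are poles of that form. So condition (2) in the statement is equivalent to asking that at every stage of the decomposition the composed map has all base-points among the poles of $\omega$.

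First I would set up the induction on the degree $d$ of $\varphi$. If $d=2$ there is nothing to prove (take $n=1$, $\phi_1=\varphi$; condition (2) is exactly the hypothesis that $\varphi_*(\omega)$ is a normal cubic form). For $d>1$ in general, the classical theory gives a quadratic transformation $\phi_1$ based at three of the base-points of $\varphi$ (possibly infinitely near, and possibly requiring a linear change of coordinates) such that $\varphi\circ\phi_1^{-1}$ has degree strictly less than $d$. Here is where care is needed: I must choose $\phi_1$ so that (a) its own base-points are poles of $\omega$ — so that $(\phi_1)_*(\omega)$ is again a normal cubic form by Proposition~\ref{prop:StdcubicBasePts} — and (b) the base-points of $\varphi\circ\phi_1^{-1}$ are poles of $(\phi_1)_*(\omega)$, so the induction hypothesis applies to $\varphi\circ\phi_1^{-1}$ with the normal cubic form $(\phi_1)_*(\omega)$ in place of $\omega$. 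Granting this, write $\varphi\circ\phi_1^{-1}=\phi_n\circ\dots\circ\phi_2$ by induction, and then $\varphi=\phi_n\circ\dots\circ\phi_2\circ\phi_1$; condition (2) for the full composition follows because each partial composite $(\phi_i\circ\dots\circ\phi_1)_*(\omega)$ equals $(\phi_i\circ\dots\circ\phi_2)_*$ applied to the normal cubic form $(\phi_1)_*(\omega)$, which is a normal cubic form by the inductive statement applied to $\varphi\circ\phi_1^{-1}$.

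The point (a) above is not automatic: since all base-points of $\varphi$ are poles of $\omega$ (by the hypothesis and Proposition~\ref{prop:StdcubicBasePts}), choosing $\phi_1$ based at three base-points of $\varphi$ would give $\phi_1$ base-points that are poles of $\omega$ — good. But the classical reduction sometimes needs a de Jonquières map, or needs the three points of $\phi_1$ to be in "general position" (not collinear, no two proper and infinitely near in a bad configuration), and the honest base-points of $\varphi$ might not be arrangeable that way. The standard fix is to use a quadratic map whose base-points include, besides base-points of $\varphi$, one or two extra \emph{free} points that we get to choose; and the freedom to choose them lets us arrange them on the poles of $\omega$ (the three lines $(X),(Y),(Z)$, or the conic-plus-line, etc., which form a one-dimensional locus, hence contain points in sufficiently general position relative to any finite configuration). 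Verifying that this can always be done — i.e. that the classical degree-lowering step can be performed with a quadratic (or de Jonquières) map all of whose base-points lie on the poles of $\omega$ — is the main obstacle, and is really the content of the lemma beyond the classical result.

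I would handle this obstacle by analysing, as in the classical proof, the multiplicities of $\varphi$ at its base-points: order them $m_1\ge m_2\ge\dots$, with $\sum m_i^2 = d^2-1$ and $\sum m_i = 3d-3$, so $m_1+m_2+m_3>d$ for the three points of highest multiplicity (these are "proper or infinitely near" base-points of $\varphi$, hence poles of $\omega$). If these three points can serve as the base-points of a genuine quadratic map (no unpleasant alignment), take that $\phi_1$: its base-points are poles of $\omega$ and $\deg(\varphi\circ\phi_1^{-1})=2d-m_1-m_2-m_3<d$. In the degenerate configurations (three collinear points, or a chain of infinitely near points) one replaces this by the classical de Jonquières trick, writing $\varphi$ as a product of a de Jonquières map and maps of smaller degree, and one checks that the auxiliary base-points introduced can be taken on the poles of $\omega$ by a dimension count; one also checks $(\phi_1)_*(\omega)$ stays a normal cubic form via Proposition~\ref{prop:StdcubicBasePts} and that the base-points of the remaining factor are poles of the new form, using the same criterion in reverse (the base-points of $\varphi\circ\phi_1^{-1}$ are, up to the action of $\phi_1$, among the base-points of $\varphi$ together with base-points of $\phi_1^{-1}$, all of which are accounted for). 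This completes the induction.
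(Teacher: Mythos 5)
Your overall strategy is the paper's: decompose as in Noether--Castelnuovo and use Proposition~\ref{prop:StdcubicBasePts} to translate condition (2) into ``all base-points of each partial composite are poles of the pushed-forward form''. That part of your write-up is fine; in particular your point (b) is automatic, since $(\varphi\phi_1^{-1})_{*}\bigl((\phi_1)_{*}(\omega)\bigr)=\varphi_{*}(\omega)$ is a normal cubic form, so Proposition~\ref{prop:StdcubicBasePts} applied to $\varphi\phi_1^{-1}$ and $(\phi_1)_{*}(\omega)$ gives the base-point condition for free. The gap is exactly at the place you yourself flag as ``the main obstacle'': the degenerate configurations are not handled, they are deferred to ``the classical de Jonqui\`eres trick'' plus ``a dimension count'', and that is where essentially all the content of the lemma lies. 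Note also a mismatch in your fallback: the classical theorem (\cite{bib:AC}, Theorem 8.3.4, which the paper invokes) produces a de Jonqui\`eres map $\psi$ whose base-points are already base-points of $\varphi$ -- hence automatically poles of $\omega$, with no auxiliary points and no dimension count needed for that factor -- but it leaves you with the task of decomposing the de Jonqui\`eres map $\psi$ itself (whose degree need not be small) into quadratic maps while keeping the form a normal cubic at every stage. Your sketch never addresses this, and it is precisely the second half of the paper's proof.

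There is also a concrete reason your induction scheme, as set up, does not close: it is an induction purely on $\deg\varphi$, but in the genuinely bad case -- a de Jonqui\`eres map of degree $d$ all of whose simple base-points are infinitely near (tangent directions at the major point $s$) -- the only quadratic maps available with base-points on the poles of $\omega$ (say based at $s$, the tangent direction $t_1$, and an auxiliary proper pole $u$ of $\omega$) do \emph{not} lower the degree: $\varphi\phi_1^{-1}$ again has degree $d$. A dimension count gives the existence of $u$ on the cubic of poles, but says nothing about termination. The paper resolves this by finer bookkeeping: such a composition keeps the degree but creates a proper simple base-point, after which the proper-point case applies and strictly drops the degree; without this (or an equivalent auxiliary invariant), your induction can loop. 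Until the degenerate case is written out with a terminating argument, the proof is incomplete.
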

\begin{proof}
We start as in the classical proof of Noether-Castelnuovo theorem, by taking a de Jonqui\`eres transformation $\psi$ (a birational map of $\p^2$ which preserves a pencil of lines) such that each base-point of $\psi$ is a base-point of $\varphi$ and $\varphi\psi^{-1}$ has degree $<d$. The existence of such a $\psi$ can be checked for example in Chapter $8$ of \cite{bib:AC} (see in particular the proof of Theorem 8.3.4). 

Since all base-points of $\varphi$ are poles of $\omega$ (Proposition~\ref{prop:StdcubicBasePts}), the same is true for $\psi$, so $\psi_{*}(\omega)$ is a normal cubic form. 

It remains thus to prove the lemma in the case where $\varphi$ is a de Jonqui\`eres transformation of degree $d>1$, which preserves the pencil of lines passing through $s\in \p^2$. We prove the result by induction on $d$, the case $d=2$ being clear. We follow the classical proof of the theorem of Noether-Castelnuovo.

 The linear system of $\varphi$ (which is the pull-back by $\varphi$ of the system of lines of the plane) consists of curves of degree $d$ passing through $s$ with multiplicity $d-1$ and through $2d-2$ other points $t_1,\dots,t_{2d-2}$ with multiplicity one. 

If at least one of the $t_i$'s is a proper point of $\p^2$, say $t_1$, there exists another  $t_j$,  say $t_2$, and a quadratic de Jonqui\`eres transformation $\phi_1$ with base-points $s,t_1,t_2$. The linear systems of $\phi_1$ and $\varphi$ intersecting into $d-1$ free points, the map $\varphi \circ (\phi_1)^{-1}$ is a de Jonqui\`eres transformation of degree $d-1$. Since $(\phi_1)_{*}(\omega)$ is a normal cubic form, the result follows from the induction hypothesis.

If no one of the $t_i$'s is a proper point of the plane, there exists at least one of these, say $t_1$, which corresponds to a tangent direction of $s$, and another point $t_j$, say $t_2$, which is infinitely near to $t_1$. We choose a proper point $u$ in $\p^2$ which is a pole of $\omega$ and which is not aligned with  $s$ and $t_1$. There exists a  quadratic de Jonqui\`eres transformation $\phi_1$ with base-points $s,t_1,u$. The linear systems of $\phi_1$ and $\varphi$ intersecting into $d$ free points, the map $\theta=\varphi \circ (\phi_1)^{-1}$ is a de Jonqui\`eres transformation of degree $d$. The linear system of $\theta$ is the image by $\phi_1$ of the linear system of $\varphi$; it has one proper base-point distinct from $q$, which corresponds to the "image" of $t_2$ by $\phi_1$ (in the decomposition of $\phi_1$ into blow-ups and blow-downs, the exceptional curve associated to $t_1$ is sent onto two a line of $\p^2$ and $t_2$ is sent onto a  general point of this line). 
 Since $(\phi_1)_{*}(\omega)$ is a normal cubic form, we can apply the preceding case to $\theta$.
\end{proof}

\begin{prop}\label{Prop:SympQuad}
Let $\varphi\colon \p^2\dasharrow \p^2$ be a birational map of degree $>1$, and assume that $$\div(\varphi_{*}(\omega_0))=\div(\omega_0)$$ $($where $\omega_0$ is the differential form $\frac{dx\wedge dy}{xy})$.
Then, there exist quadratic transformations $\phi_1,\dots,\phi_n$ such that

\begin{enumerate}
\item
$\varphi=\phi_n\circ \dots \circ \phi_1$;
\item
for $i=1,\dots,n$, $\div((\phi_i\circ \dots \circ \phi_1)_{*}(\omega_0))=\div(\omega_0)$.
\end{enumerate}
\end{prop}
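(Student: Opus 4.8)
The plan is to refine the decomposition given by Lemma~\ref{Lemm:DecompStup}. Since by hypothesis $\div(\varphi_{*}(\omega_0))=\div(\omega_0)$, the form $\varphi_{*}(\omega_0)$ is a normal cubic form (of type (i)), so Lemma~\ref{Lemm:DecompStup} applies with $\omega=\omega_0$: there are quadratic transformations $\psi_1,\dots,\psi_n$ with $\varphi=\psi_n\circ\dots\circ\psi_1$ such that $\omega^{(i)}:=(\psi_i\circ\dots\circ\psi_1)_{*}(\omega_0)$ is a normal cubic form for each $i$ (for $i=0$ this is just $\omega_0$). It then remains to improve this decomposition so that each $\omega^{(i)}$ has divisor exactly $\div(\omega_0)$.

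The crucial point is to show that every $\omega^{(i)}$ is in fact of type (i) --- i.e.\ that $-\div(\omega^{(i)})$ is a triangle of three lines --- which I would prove by induction on $i$, the case $i=0$ being clear. Suppose $\omega^{(i-1)}$ is of type (i). By Proposition~\ref{prop:StdcubicBasePts} all base-points of $\psi_i$ are poles of $\omega^{(i-1)}$. Using Lemma~\ref{Lemm:MultZeros} as in the proof of Proposition~\ref{prop:StdcubicBasePts}, I would observe that, for a normal cubic form whose polar divisor is a loop of smooth rational curves each of multiplicity one, a point is a pole exactly when it is a corner of the loop (a point lying on two of its components), and that blowing up such a corner again produces such a loop, with one more component. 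Hence the minimal resolution $\eta\colon S\to\p^2$ of the quadratic map $\psi_i$ blows up three corners, so $D_S:=-\div(\eta^{*}(\omega^{(i-1)}))$ is a loop of six smooth rational curves. The other structure morphism $\epsilon:=\psi_i\circ\eta\colon S\to\p^2$ contracts exactly three irreducible curves, and $\epsilon_{*}(D_S)=-\div(\omega^{(i)})$; since this push-forward is an effective divisor linearly equivalent to $-K_{\p^2}$, hence a cubic curve, it has at most three components, which forces $\epsilon$ to contract exactly three components of $D_S$ and nothing else, leaving a loop of three components. As $\omega^{(i)}$ is a normal cubic form and $-\div(\omega^{(i)})$ has three components, it must be a triangle; thus $\omega^{(i)}$ is of type (i).

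Once all the $\omega^{(i)}$ are of type (i), I would finish by conjugating with linear maps. For each $i$ pick $\ell_i\in\PGL(3,\C)$ with $\div((\ell_i)_{*}(\omega^{(i)}))=\div(\omega_0)$ --- possible because $-\div(\omega^{(i)})$ is a triangle and $\PGL(3,\C)$ is transitive on triangles --- and take $\ell_0=\ell_n=\mathrm{id}$, which is legitimate since $\omega^{(0)}=\omega_0$ and $\div(\omega^{(n)})=\div(\varphi_{*}(\omega_0))=\div(\omega_0)$. Setting $\phi_i:=\ell_i\circ\psi_i\circ\ell_{i-1}^{-1}$, each $\phi_i$ is again a quadratic transformation, a telescoping computation gives $\phi_n\circ\dots\circ\phi_1=\ell_n\circ\varphi\circ\ell_0^{-1}=\varphi$, and $(\phi_i\circ\dots\circ\phi_1)_{*}(\omega_0)=(\ell_i)_{*}(\omega^{(i)})$, whose divisor is $\div(\omega_0)$, as required.

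The main obstacle is the inductive step on the type of the $\omega^{(i)}$: one must check carefully, via Lemma~\ref{Lemm:MultZeros}, that the poles of a type-(i) normal cubic form and of all its blow-ups are precisely the corners of the associated loop, so that resolving a quadratic map between such configurations can turn a triangle only into a triangle --- in particular, normal cubic forms of type (ii) or (iii) never appear along the chain produced by Lemma~\ref{Lemm:DecompStup}. Everything else (invoking Lemma~\ref{Lemm:DecompStup} and the linear change of coordinates) is bookkeeping.
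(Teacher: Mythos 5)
Your reduction to the case where every intermediate form is of type (i) rests on a false characterization of poles, and this is where the proof breaks. In Proposition~\ref{prop:StdcubicBasePts} (and in Lemma~\ref{Lemm:MultZeros}), a point $q$ is a pole of $\omega$ as soon as $\div(\omega)$ has multiplicity $\le -1$ at $q$, i.e.\ as soon as $q$ lies anywhere on the polar divisor; it need not be a corner of the loop (the corners are exactly the points of multiplicity $-2$, which is the condition for the exceptional curve to be a pole, a different statement). Consequently the quadratic maps produced by Lemma~\ref{Lemm:DecompStup} may very well have base-points at smooth points of the triangle, and then the image form need not be a triangle. Concretely, take the quadratic map with proper base-points $p_1=(1:0:0)$, a general point $u$ of the line $Y=0$ and a general point $v$ of the line $Z=0$: all three are poles of $\omega_0$, the two lines $Y=0$ and $Z=0$ are contracted, the line $X=0$ (which contains no base-point) is sent onto a conic, and the image of $\omega_0$ is a normal cubic form of type (ii), a conic plus a line. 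Forms of type (ii) and (iii) therefore do occur along the chain given by Lemma~\ref{Lemm:DecompStup}, so your induction ``triangle goes to triangle'' cannot get started, and the main difficulty of the proposition is untouched.

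This is precisely the point the paper's proof spends most of its effort on: it keeps the decomposition of Lemma~\ref{Lemm:DecompStup} with arbitrary normal cubic forms $\omega_i$, introduces the invariant $(m,n-r)$ where $m$ is the maximal degree of an irreducible component of some $-\div(\omega_i)$ and $r$ the first index where such a component appears, and, in the cases $m=2$ and $m=3$, rewrites the product $\phi_{r+1}\circ\phi_r$ as a product of new quadratic maps whose base-points are chosen on the conic or nodal cubic $\Gamma$ (including auxiliary general points $a,b$ of $\Gamma$), so that the intermediate forms have components of smaller degree; induction on $(m,n-r)$ then reduces to $m=1$. Your final step --- conjugating by linear maps to normalize all triangles to $XYZ=0$ --- is exactly the paper's $m=1$ case and is fine, but it is only the easy endgame; you would need to supply an argument of the above type (or another mechanism) to eliminate the type (ii) and (iii) forms before it applies.
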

\begin{rema}
A differential form $\omega$ satisfies $\div(\omega)=\div(\omega_0)$ if and only if $\omega=\mu \omega_0$ for some  $\mu\in\C^*$.  In fact, as we will see after, if a birational map sends $\omega_0$ onto $\omega$, the number $\mu$ is $\pm 1$, and both are possible.
\end{rema}
\begin{proof}
Applying Lemma~\ref{Lemm:DecompStup}, we obtain a decomposition $\varphi=\phi_n\circ \dots \circ \phi_1$ where $\omega_i:=(\phi_i\circ \dots \circ \phi_1)_{*}(\omega_0)$ is a normal cubic form for $i=0,\dots,n$. 

Denote by $m$ the maximal degree of the irreducible components of $-\div(\omega_i)$ for $i=0,\dots,n$, denote by $r$ the minimal index where $\omega_{r}$ has a component of degree $m$. We now prove the result by induction on the pairs $(m,n-r)$, ordered lexicographically.

If $m=1$,  $-\div(\omega_i)$ is the union of three lines for each $i$. Composing the quadratic maps with an automorphism of $\p^2$ which sends $\div(\omega_i)$ onto $\div(\omega_0)$, we can assume that $\div(\omega_i)=\div(\omega_0)$ for each $i$ and obtain the result.

Suppose now that $m=2$, which implies that $0<r<n$, since $\omega_0=\omega_n\not= \omega_r$. The divisor $-\div(\omega_r)$ is the union of a line $L$ and a conic $\Gamma$, and the divisor $-\div(\omega_{r-1})$ is the union of three lines. In particular, the curve $\Gamma_0=(\phi_r^{-1})_{*}(\Gamma)$ is a line and 
$L_0=(\phi_r^{-1})_{*}(L)$ is either a point or a line. This implies that the three base-points $s_1,s_2,s_3$ of $\phi_r^{-1}$ belong to $\Gamma$ (as proper or infinitely near points) and that at least one of them lies on $L$. Up to renumbering, $s_1$ is one of the two points of $\Gamma\cap L$, and $s_2$ is either a proper point of $\Gamma$ or the point infinitely near to $s_1$ corresponding to the tangent of $\Gamma$. The curve $\Gamma_5= (\phi_{r+1})_{*}(\Gamma)$ is either a conic or a line, so at least two of the three base-points $t_1,t_2,t_3$ of $\phi_{r+1}$ belongs to $\Gamma$, and $L_5=  (\phi_{r+1})_{*}(L)$ can be a point, a line or a conic. Up to renumbering, $t_1$ is a proper point of $\Gamma$, and $t_2$ is either another proper point of $\Gamma$, or the point infinitely near to $t_1$ corresponding to the tangent direction of $\Gamma$. We can also assume that if $t_2$ belongs to $\Gamma\cap L$, so does~$t_1$.

\[\xymatrix@R=1mm@C=1cm{
&(L_0,\Gamma_0)&&(L,\Gamma)\ar@{-->}[ll]_{\phi_{r}^{-1}}^{[s_1,s_2,s_3]}
\ar@{-->}[rr]^{\phi_{r+1}}_{[t_1,t_2,t_3]}
&&(L_5,\Gamma_5)
}\]

We now define two proper points $a,b$ of $\Gamma$. If $t_1\in \Gamma\cap L$, the point $b$ is a general point of $\Gamma$ (i.e.\ distinct from the $s_i$ and $t_i$), and otherwise $b$ is such that $L\cap \Gamma=\{s_1,b\}$. The point $a$ is a general point of $\Gamma$ (i.e.\ distinct from $b$ and all $s_i$, $t_i$). We define four birational quadratic maps $\chi_1,\chi_2,\chi_3,\chi_4$ of $\p^2$, with base points $[s_1,s_2,a]$, $[s_1,a,b]$, $[t_1,a,b]$ and $[t_1,t_2,b]$ respectively. We moreover set $\chi_0=\phi_r^{-1}$ and $\chi_5=\phi_{r+1}$. By construction, we have the following: for $i=0,\dots, 4$, $\chi_i$ has its three base-points on $\Gamma$ and at least one of them belongs to $L$, so $\Gamma_i=(\chi_i)_{*}(\Gamma)$ is a line, and $L_i=(\chi_i)_{*}(L)$ is either a point or a line; moreover $\chi_i$ and $\chi_{i+1}$ have two common base-points, so $\theta_i=\chi_{i+1}\circ \chi_{i}^{-1}$ is a quadratic map. We obtain the following commutative diagram:

\[\xymatrix@R=0.5cm@C=1.2cm{
(L_0,\Gamma_0)\ar@{-->}[dd]^{\theta_0}&& && (L_5,\Gamma_5)\\
&&(L,\Gamma)
\ar@{-->}[rru]^{\chi_5=\phi_{r+1}}_{[t_1,t_2,t_3]}
\ar@{-->}[llu]_{\chi_0=\phi_{r}^{-1}}^{[s_1,s_2,s_3]}
\ar@{-->}[rrd]^{\chi_4}_{[t_1,t_2,b]}
\ar@{-->}[lld]_{\chi_1}^{[s_1,s_2,a]}
\ar@{-->}[rdd]^>>>>>{\chi_3}_>>>>>>>{[t_1,a,b]}
\ar@{-->}[ldd]_>>>>>{\chi_2}^>>>>>>>{[s_1,a,b]}
&&&\\
 (L_1,\Gamma_1)\ar@{-->}[dr]^{\theta_1} && && (L_4,\Gamma_4)\ar@{-->}[uu]^{\theta_4} \\
& (L_2,\Gamma_2)\ar@{-->}[rr]^{\theta_2}&& (L_3,\Gamma_3)\ar@{-->}[ru]^{\theta_3} }\]

By construction, $-\div((\chi_i)_{*}(\omega_r))$ is the union of three lines for $i=0,\dots,4$; replacing $\phi_{r+1}\circ \phi_r$ by $\theta_4\theta_3\theta_2\theta_1\theta_0$, we reduce the pair $(m,n-r)$.

Suppose now that $m=3$ (which implies that $1<r<n-1$). The divisor $-\div(\omega_r)$ consists of a nodal cubic curve $\Gamma$. The curve $\Gamma_0=(\phi_r^{-1})_{*}(\Gamma)$ is a conic, so all base-points $s_1,s_2,s_3$ of $\phi_r^{-1}$ belong to $\Gamma$, and one of them, say $s_1$, is the singular point of $\Gamma$. Up to reordering, we can assume that $s_2$ is either a proper point of $\Gamma$ or the point infinitely near to $s_1$ corresponding to the tangent of $\Gamma$. We denote by $t_1,t_2,t_3$ the three base-points of $\phi_{r+1}$. The curve $\Gamma_4=(\phi_{r+1})_{*}(\Gamma)$ is either a cubic or a conic, which means that either all $t_i$'s belong to $\Gamma$ or that only two belong to $\Gamma$ but one of these two is the singular point $s_1$.
 If $s_1$ is a base-point of $\phi_{r+1}$, we can assume that $t_1=s_1$, that $t_2$ belongs to $\Gamma$ and that either $t_2$ is a proper point of $\p^2$ or is infinitely near to $t_1=s_1$. If $s_1$ is not equal to any of the $t_i$, we can assume that $t_2$ is a proper point of $\Gamma$.
We choose a general proper point $a$ of $\Gamma$, not collinear with any two of the $s_i,t_i$ and define two birational quadratic maps $\chi_1,\chi_2$ of $\p^2$, with base points $[s_1,s_2,a]$ and $[s_1,t_2,a]$ respectively. We moreover set $\chi_0=\phi_r^{-1}$ and $\chi_3=\phi_{r+1}$. By construction, we have the following: for $i=0,\dots, 2$, $\chi_i$ has its three base-points on $\Gamma$ and at least one of them is $s_1$, so $\Gamma_i=(\chi_i)_{*}(\Gamma)$ is a conic; moreover $\chi_i$ and $\chi_{i+1}$ have two common base-points, so $\theta_i=\chi_{i+1}\circ \chi_{i}^{-1}$ is a quadratic map. We obtain the following commutative diagram:

\[\xymatrix@R=0.5cm@C=1.5cm{
&&\Gamma
\ar@{-->}[rrd]^{\chi_3=\phi_{r+1}}_{[t_1,t_2,t_3]}
\ar@{-->}[lld]_{\chi_0=\phi_{r}^{-1}}^{[s_1,s_2,s_3]}
\ar@{-->}[rdd]^>>>>>{\chi_2}_>>>>>>>{[s_1,t_2,a]}
\ar@{-->}[ldd]_>>>>>{\chi_1}^>>>>>>>{[s_1,s_2,a]}
&&&\\
 \Gamma_0\ar@{-->}[dr]^{\theta_0} && && \Gamma_3 \\
& \Gamma_1\ar@{-->}[rr]^{\theta_1}&& \Gamma_2\ar@{-->}[ru]^{\theta_2} }\]

By construction, $-\div((\chi_i)_{*}(\omega_r))$ is the union of the conic $\Gamma_i$ and a line for $i=0,\dots,4$; replacing $\phi_{r+1}\circ \phi_r$ by $\theta_2\theta_1\theta_0$, we reduce the pair $(m,n-r)$.
\end{proof}
\section{Quadratic elements of $\Symp$ and the proof of Theorem~\ref{Thm:Generators}}\label{Sec:Quad}
We now describe some of the main quadratic elements of $\Symp$, useful in the   generation of elements of $\Symp$ (see Proposition~\ref{Prop:SympQuad}).

We fix notation for some points which are poles of $\omega_0$.
The points $p_1,p_2,p_3$ are the vertices of the triangle $XYZ=0$, and $q_1,q_2,q_3$ are points on edges:
 
\[\begin{array}{lll}
p_1=(1:0:0)& p_2=(0:1:0)& p_3=(0:0:1)\\
q_1=(0:1:-1)& q_2=(1:0:-1)& q_3=(1:-1:0)\\
\end{array}\]

Any quadratic birational transformation of $\p^2$ has three base-points. We describe now some quadratic transformations, by giving their description on $\C^2$, $\p^2$ (writing only the image of $(x,y)$ and $(X:Y:Z)$ respectively) and by giving their base-points. Firstly, we describe the classical generators:

\[\begin{array}{rlll}
I^2 & (1/x,1/y) &(YZ:XZ:XY) & p_1,p_2,p_3\\
P & (y,(y+1)/x) & (XY:(Y+Z)Z:XZ) & p_1,p_2,q_1\\
P^2 & ((y+1)/x,(x+y+1)/xy) & (Y(Y+Z):Z(X+Y+Z):XY) & p_1,q_1,q_2\\
P^3 & ((x+y+1)/xy,(x+1)/y) & ((X+Y+Z)Z:X(X+Z):XY) & p_2,q_1,q_2\\
P^4 & ((x+1)/y,x) & (Z(X+Z):XY:YZ) & p_1,p_2,q_2.
\end{array}\]
Secondly, we construct more complicated elements. For any $\lambda\in \C^{*}$, we denote by $\rho_\lambda\in \Aut(\p^2)$ the automorphism $(X:Y:Z)\mapsto (\lambda X:Y:Z)$. If $\lambda\not=-1$, the maps $S_\lambda$ and $T_{\lambda}$, respectively given by
$S_\lambda=(P^2C)^{-1} \rho_{-\lambda} P^2C$ and $T_{\lambda}=P^2\rho_{-\lambda} CP^2$, are described in the following table:

\[\begin{array}{rlll}
S_\lambda& (-\lambda X(X+Y+Z): Y(X+Y-\lambda Z): Z(-\lambda X+Y-\lambda Z))& (0:\lambda:1),q_2,q_3\\
T_{\lambda}&(XY: (Y+Z)(\lambda Z-Y): -\lambda XZ)& p_1,q_1,(0:\lambda:1)
\end{array}\]
Recall that $C$ is the automorphism $(X:Y:Z)\mapsto (Y:Z:X)$ of $\p^2$, which corresponds to the birational map $(x,y)\dasharrow (y/x,1/x)$ of $\C^2$, and thus to the matrix $\left(\begin{array}{cc}-1 &  1\\
-1&  0\end{array}\right)$ of $\SL(2,\ZZ)$. We denote by $\Sym_{(X,Y,Z)}\subset \Aut(\p^2)$ the symmetric group of permutations of the variables, generated by $C$ and $(X:Y:Z)\mapsto (Y:X:Z)$. We now describe linear and quadratic elements of $\Symp$.

\begin{lemm}\label{Lem:SymAut}
The group of automorphisms of $\p^2$ which preserve the triangle $$XYZ=0$$ is $(\C^*)^2\rtimes \Sym_{(X,Y,Z)}$, and its subgroup $(\C^*)^2\rtimes <C>$ is equal to the group of automorphisms of $\p^2$ which are symplectic.
\end{lemm}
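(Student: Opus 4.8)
The plan is to work inside $\Aut(\p^2)=\PGL(3,\C)$ and to analyse the stabiliser of the triangle $XYZ=0$ directly. Any $g\in\Aut(\p^2)$ preserving the divisor $XYZ=0$ must permute its three irreducible components, the lines $\{X=0\},\{Y=0\},\{Z=0\}$; this yields a homomorphism from the stabiliser to the symmetric group on those three lines, which is surjective because $\Sym_{(X,Y,Z)}$ already realises every such permutation. An element $g$ of the kernel fixes each line, hence fixes the three points $p_1,p_2,p_3$ cut out pairwise by them (e.g.\ $p_1=\{Y=0\}\cap\{Z=0\}$), and a projective transformation fixing these three non-collinear points is diagonal, $(X:Y:Z)\mapsto(\alpha X:\beta Y:\gamma Z)$; modulo scalars this is exactly $(\C^*)^2$, which conversely clearly lies in the kernel. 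Since $\Sym_{(X,Y,Z)}$ is a section of this exact sequence, normalises $(\C^*)^2$ (a permutation conjugate of a diagonal matrix is diagonal) and meets it trivially (a scalar permutation matrix is the identity), I conclude that the stabiliser is $(\C^*)^2\rtimes\Sym_{(X,Y,Z)}$.

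For the second assertion, note first that a symplectic automorphism $g$ satisfies $g_*\omega_0=\omega_0$, hence preserves $\div(\omega_0)=-(X)-(Y)-(Z)$ and therefore the triangle, so $g\in(\C^*)^2\rtimes\Sym_{(X,Y,Z)}$ by the first part. Conversely, any $g$ in this group preserves $\div(\omega_0)$, so $g_*\omega_0$ has the same divisor as $\omega_0$ and thus equals $\mu(g)\,\omega_0$ for a unique $\mu(g)\in\C^*$ (by the remark following Proposition~\ref{Prop:SympQuad}); the assignment $g\mapsto\mu(g)$ is a homomorphism $(\C^*)^2\rtimes\Sym_{(X,Y,Z)}\to\C^*$ whose kernel is, by definition, the symplectic subgroup. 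It remains to compute this kernel. A direct substitution gives $\mu\equiv 1$ on the torus, from $\frac{d(\alpha x)\wedge d(\beta y)}{(\alpha x)(\beta y)}=\frac{dx\wedge dy}{xy}$, and $\mu(C)=1$, since $C\colon(x,y)\dasharrow(y/x,1/x)$ gives $\frac{d(y/x)\wedge d(1/x)}{(y/x)(1/x)}=\frac{dx\wedge dy}{xy}$, whereas the transposition $(X:Y:Z)\mapsto(Y:X:Z)$, i.e.\ $(x,y)\mapsto(y,x)$, sends $\omega_0$ to $-\omega_0$. Thus $\mu$ is trivial on $(\C^*)^2$ and restricts on $\Sym_{(X,Y,Z)}$ to the (unique nontrivial) sign character, so $\ker\mu=(\C^*)^2\rtimes<C>$, which is the desired description.

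The computations involved are all short, so the proof has no real obstacle; the one point requiring a little care is the reduction step --- observing that an automorphism preserving the triangle permutes its line components, so that the homomorphism to the symmetric group is available --- together with keeping track of signs in the two differential-form pullbacks that distinguish $C$ from a transposition.
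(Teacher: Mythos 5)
Your argument is correct, and it is essentially the calculation the paper leaves implicit (its proof is just ``follows from a simple calculation''): identifying the stabiliser of the triangle as $(\C^*)^2\rtimes \Sym_{(X,Y,Z)}$ and then checking that the character $g\mapsto\mu(g)$ defined by $g_*\omega_0=\mu(g)\omega_0$ is trivial on the torus and on $C$ but equals $-1$ on a transposition, so its kernel is $(\C^*)^2\rtimes <C>$. Nothing more is needed.
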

\begin{proof}
Follows from a simple calculation.
\end{proof}
\begin{lemm}\label{Lem:QuadProp}

Let $\varphi\colon \p^2\dasharrow \p^2$ be a birational map of degree $2$ which has three proper base-points. The following condition are equivalent:

\begin{enumerate}\item $\div(\varphi_{*}(\omega_0))=\div(\omega_0)$.\item
$\varphi=\alpha Q\beta$, where $\alpha\in (\C^*)^2\rtimes \Sym_{(X,Y,Z)}$, $\beta\in (\C^*)^2\rtimes <C>$ and $Q\in \{I^2,P,P^2,P^3,P^4\}$ or $Q\in\{S_\lambda,T_{\lambda}\}$ for some $\lambda\in \C^{*}\backslash\{-1\}$. \end{enumerate}

\end{lemm}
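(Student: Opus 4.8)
The plan is to prove $(2)\Rightarrow(1)$ first, since this is immediate: the five maps $I^2,P,P^2,P^3,P^4$ are explicitly checked to be elements of $\Symp$ (they are powers of $P$ times linear toric maps, or one computes directly from the formulas in the table that $\varphi^*(\omega_0)=\pm\omega_0$), and $S_\lambda=(P^2C)^{-1}\rho_{-\lambda}P^2C$, $T_\lambda=P^2\rho_{-\lambda}CP^2$ are conjugates/products of such maps by the toric automorphism $\rho_{-\lambda}$, which lies in $(\C^*)^2$. Since $\div(\omega_0)$ is preserved by any element of $(\C^*)^2\rtimes\Sym_{(X,Y,Z)}$ by Lemma~\ref{Lem:SymAut}, and $\div((\alpha Q\beta)_*(\omega_0))=\alpha_*(\div(Q_*(\beta_*\omega_0)))=\alpha_*(\div(\omega_0))=\div(\omega_0)$, assertion $(1)$ follows.

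The substantial direction is $(1)\Rightarrow(2)$. By Proposition~\ref{prop:StdcubicBasePts}, the hypothesis $\div(\varphi_*\omega_0)=\div(\omega_0)$ forces all three base-points of $\varphi$ to be poles of $\omega_0$, i.e.\ to lie on the triangle $XYZ=0$ (as proper points, by assumption). Moreover $\varphi^{-1}$ also sends $\omega_0$ to $\pm\omega_0$, so the three base-points of $\varphi^{-1}$ likewise lie on the triangle. A quadratic map with three proper base-points $a_1,a_2,a_3$ (not collinear, since the map is quadratic and birational) contracts the three lines $\overline{a_ia_j}$ to the three base-points of $\varphi^{-1}$. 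So the problem reduces to a finite combinatorial classification: list, up to the action of $(\C^*)^2\rtimes\Sym_{(X,Y,Z)}$ on the source and $(\C^*)^2\rtimes\langle C\rangle$ on the target, the possible configurations of three non-collinear points on the triangle $XYZ=0$ such that the three lines through pairs of them also meet the triangle in a way compatible with the image configuration.

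Concretely, I would stratify by how the three base-points distribute among the three vertices $p_1,p_2,p_3$ and the three open edges. The cases are: (a) all three base-points are vertices — this forces $\varphi\in\Sym_{(X,Y,Z)}\cdot I^2\cdot(\text{toric})$; (b) two vertices and one edge point, and the mirror cases — these give the maps $P,P^4$ and their relabelings (note $P$ has base-points $p_1,p_2,q_1$, $P^4$ has $p_1,p_2,q_2$); (c) one vertex and two edge points — the maps $P^2,P^3$ (base-points $p_1,q_1,q_2$ and $p_2,q_1,q_2$) together with $T_\lambda$ (base-points $p_1,q_1,(0:\lambda:1)$), the parameter $\lambda$ recording the position of the ``free'' edge point which is not forced; (d) three edge points — the maps $S_\lambda$ (base-points $(0:\lambda:1),q_2,q_3$), again with one free parameter. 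In each stratum one writes down the general such configuration, uses a toric automorphism $\beta$ on the right to normalize as many base-points as possible to the standard $p_i,q_i$, reads off which standard quadratic map $Q$ realizes the resulting base-point scheme, and then checks that the target base-point configuration of $\varphi Q^{-1}$ is again on the triangle so that $\varphi Q^{-1}$ is an automorphism preserving the triangle, hence in $(\C^*)^2\rtimes\Sym_{(X,Y,Z)}$ by Lemma~\ref{Lem:SymAut}; setting $\alpha=\varphi Q^{-1}\beta^{-1}$ gives the decomposition. A small point to watch: one must verify that when two base-points lie on the same edge the third cannot be collinear with them, and that the constraint ``$\varphi^{-1}$ also symplectic'' is automatically satisfied once $\varphi$ is — or rather, one uses it to cut down the list.

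The main obstacle is the bookkeeping in case-splitting: making sure the list $\{I^2,P,P^2,P^3,P^4,S_\lambda,T_\lambda\}$ is genuinely exhaustive and that no configuration has been double-counted or missed, in particular handling the degenerate sub-cases where a base-point lies at the intersection of two edges (a vertex) versus in the interior of an edge, and confirming that the one-parameter families $S_\lambda,T_\lambda$ account for exactly the configurations where the base-point data does not pin down the map up to the toric group. I expect the verification that $\varphi Q^{-1}$ has all its base-points on the triangle — equivalently that it is linear — to be the key structural step in each case: it follows because $\varphi$ and $Q$ have the same base-points in the relevant stratum, so their composition has none, i.e.\ is an automorphism, and it preserves the triangle because both $\varphi_*\omega_0$ and $Q_*\omega_0$ have divisor equal to $-\div(\omega_0)$.
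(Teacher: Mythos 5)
Your overall strategy is the same as the paper's: show $(2)\Rightarrow(1)$ directly, and for $(1)\Rightarrow(2)$ classify the base-point configurations on the triangle $XYZ=0$ up to the toric action, stratified by the number $k$ of base-points that are vertices, then conclude that $\varphi(Q\beta)^{-1}$ is an automorphism with $\div$-preserving action on $\omega_0$, hence lies in $(\C^*)^2\rtimes\Sym_{(X,Y,Z)}$ by Lemma~\ref{Lem:SymAut}. That part of your argument is sound and matches the paper.

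However, there is one missing ingredient, and it is precisely the fact that makes your case list exhaustive. From Proposition~\ref{prop:StdcubicBasePts} you only get that the three base-points are poles of $\omega_0$, i.e.\ lie on the triangle; this is necessary but not sufficient. The hypothesis $\div(\varphi_{*}(\omega_0))=\div(\omega_0)$ gives more: each of the three lines $X=0$, $Y=0$, $Z=0$ is a pole of $\omega_0$, so its image under $\varphi$ must be a pole of $\varphi_{*}(\omega_0)=\pm\omega_0$ or a point, hence a line or a point, hence \emph{each line of the triangle must contain at least one (in fact one or two) of the three base-points}. Without this, your strata are not exhaustive as stated: for instance the non-collinear triple consisting of $p_1$ together with one interior point of $Y=0$ and one interior point of $Z=0$ (leaving $X=0$ uncovered), or, in your stratum (d), two interior points of one edge plus one interior point of another edge, are configurations of poles that appear nowhere in the list $\{I^2,P,P^2,P^3,P^4,S_\lambda,T_\lambda\}$. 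These configurations do violate condition (1) -- the uncovered line is sent to a conic which would then be a pole of $\varphi_{*}(\omega_0)$ -- but your written reduction never rules them out; your remark about collinearity does not cover them, and the constraint you invoke about $\varphi^{-1}$ (images of the contracted lines) is not the one that does the job. Once this per-line constraint is stated, the rest of your case analysis ($k=3,2,1,0$, normalizing the non-vertex points to $q_1,q_2$ or $(0:\lambda:1)$ by $\beta\in(\C^*)^2$ composed with a power of $C$, with $\lambda\neq-1$ by non-collinearity) goes through exactly as in the paper.
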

\begin{proof}
The second assertion clearly implies the first one, since $Q\in \Symp$ is a quadratic map with three proper base-points. It remains thus to prove the other direction.

Denote by $L_1,L_2,L_3\subset \p^2$ the three lines of equation $X=0$, $Y=0$ and $Z=0$. 
Each of the three lines $L_i$ is a pole of  $\omega_0$ and its image by $\varphi$ is thus either a point or a line. So for each $i$, one or two of the base-points of $\varphi$ belong to $L_i$.

Denote by  $k\in \{0,1,2,3\}$ the number of base-points of $\varphi$ which are vertices of the triangle $XYZ=0$. Replacing $\varphi $ by $\varphi C$ or $\varphi C^2$ if neeeded, the $k$ vertices are the $k$ first points of the triple $(p_1,p_2,p_3)$.
  We will find  $Q\in \{I,P,P^2,P^3,P^4,S_\lambda,T_\lambda\}$  and $\beta\in (\C^{*})^2\rtimes \Sym_{(X,Y,Z)}$ such that $\varphi$ and $Q\beta$ have the same base-points.

Before proving the existence of $Q$, $\beta$, let us prove how it yields the result. The fact that $\varphi$ and $Q\beta$ have the same base-points implies that $\varphi=\alpha Q\beta$ for some $\alpha\in \Aut(\p^2)$. Since $\div(\omega_0)=\div(\varphi_{*}(\omega_0))=\div(Q_{*}(\omega_0))=\div(\beta_{*}(\omega_0))$, we also have $\div(\alpha_{*}(\omega_0))=\div(\omega_0)$, which means  that $\alpha\in(\C^*)^2\rtimes \Sym_{(X,Y,Z)}$ (Lemma~\ref{Lem:SymAut}).

We find now $\beta$ and $Q$, by studying the possibilities for $k$.

 If $k=3$, the base-points of $\varphi$ are $p_1,p_2,p_3$ and it suffices to choose $Q=I^2$ and $\beta=1$.
 
 If $k=2$, the base-points are $p_1,p_2,u$, where $u\in (L_1\cup L_2)\backslash L_3$. We choose $\beta\in (\C^{*})^2$ which sends $u$ onto $q_1$ or $q_2$, and choose respectively $Q=P$ or $Q=P^4$.
 
 If $k=1$, the base-points are $p_1,u,v$, where $u\in L_1\backslash (L_2\cup L_3)$. If $v\in L_2$ we choose $\beta \in (\C^{*})^2$ which sends $u$ onto $q_1$ and $v$ onto $q_2$, and choose then $Q=P^2$. If $v\in L_3$, we choose $\beta=\beta'C^{-1}$ , where $\beta'\in (\C^{*})^2$, such that $\beta$ sends respectively $p_1,u,v$ onto $p_2,q_2,q_1$, and choose $Q=P^3$.  If $v\in L_1$, we choose $\beta\in (\C^{*})^2$ which sends $u$ onto $q_1=(0:-1:1)$; the point $v$ is sent onto $(0:\lambda:1)$ for some $\lambda\in \C^{*}\backslash\{-1\}$. We can thus choose $Q=T_\lambda$. 
 
 If $k=0$, the base-points are $u,v,w$, which belong respectively to $L_1$, $L_2$, $L_3$. We choose $\beta\in (\C^{*})^2$ which sends $v$ onto $q_2$ and $w$ onto $q_3$. The point $u$ is sent onto $(0:\lambda:1)$, for some $\lambda\in \C^{*}\backslash\{-1\}$ ($\lambda$ is not $-1$ because $u,v,w$ are not collinear). We choose $Q=S_\lambda$.
 \end{proof}

Now, using all above results, we can prove Theorem \ref{Thm:Generators}, which is a direct consequence of the following proposition.

\begin{prop}\label{Prop:GenCP}
The group $\Symp$ is generated by $(\C^*)^2$, $C$ and  $P$.
\end{prop}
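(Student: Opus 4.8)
The plan is to bootstrap from Proposition~\ref{Prop:SympQuad}, which reduces any element of $\Symp$ of degree $>1$ to a product of quadratic maps each preserving $\div(\omega_0)$, using Lemma~\ref{Lem:QuadProp} for the quadratic pieces with three proper base-points and Lemma~\ref{Lem:SymAut} for the degree-one case. Write $G=\langle(\C^*)^2,C,P\rangle$; since all three generators preserve $\omega_0$ we have $G\subseteq\Symp$. First I collect a few facts about $G$. A one-line computation on $\C^2$ shows $PCP\colon(x,y)\dasharrow(1/y,x)$, i.e.\ $I=PCP\in G$, hence $\SL(2,\ZZ)=\langle I,C\rangle\subseteq G$; in particular $I^2\in G$. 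From the formulas $S_\lambda=(P^2C)^{-1}\rho_{-\lambda}P^2C$ and $T_\lambda=P^2\rho_{-\lambda}CP^2$, together with $\rho_{-\lambda}\in(\C^*)^2$, we get $S_\lambda,T_\lambda\in G$. Finally the (non-symplectic) transposition $\sigma\colon(x,y)\dasharrow(y,x)$ normalises $G$: indeed $\sigma P\sigma=P^{-1}$, $\sigma C\sigma\in\SL(2,\ZZ)$ and $\sigma(\C^*)^2\sigma=(\C^*)^2$.

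The heart of the argument is the claim that \emph{every} quadratic map $\phi$ with $\div(\phi_*(\omega_0))=\div(\omega_0)$ lies in $G\cup G\sigma$. If $\phi$ has three proper base-points, Lemma~\ref{Lem:QuadProp} gives $\phi=\alpha Q\beta$ with $\beta\in(\C^*)^2\rtimes\langle C\rangle\subseteq G$, with $Q\in\{I^2,P,P^2,P^3,P^4,S_\lambda,T_\lambda\}\subseteq G$, and with $\alpha\in(\C^*)^2\rtimes\Sym_{(X,Y,Z)}$ an automorphism preserving the triangle $XYZ=0$. By Lemma~\ref{Lem:SymAut} the symplectic automorphisms in this last group are exactly $(\C^*)^2\rtimes\langle C\rangle$, and $\sigma$ represents the non-trivial coset modulo them; hence $\alpha\in G\cup G\sigma$, and since $\sigma$ normalises $G$ we get $\phi\in G\cup G\sigma$. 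If instead $\phi$ has an infinitely near base-point, then by Proposition~\ref{prop:StdcubicBasePts} applied to $\phi$ and to $\phi^{-1}$ all base-points of both lie on the triangle $XYZ=0$, which severely restricts the configuration; a short inspection of the finitely many possibilities shows that, after pre- and post-composition by suitable symplectic automorphisms, $\phi$ becomes a degree-two monomial map, i.e.\ an element of $\SL(2,\ZZ)\subseteq G$ (up to a factor $\sigma$). Thus again $\phi\in G\cup G\sigma$.

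Now let $\varphi\in\Symp$. If $\deg\varphi=1$ then $\varphi$ is a symplectic automorphism, so $\varphi\in(\C^*)^2\rtimes\langle C\rangle\subseteq G$ by Lemma~\ref{Lem:SymAut}. If $\deg\varphi>1$, Proposition~\ref{Prop:SympQuad} yields $\varphi=\phi_n\circ\cdots\circ\phi_1$ with each $\phi_i$ quadratic and each $\psi_i:=\phi_i\circ\cdots\circ\phi_1$ satisfying $\div((\psi_i)_*(\omega_0))=\div(\omega_0)$. Then $\phi_i=\psi_i\psi_{i-1}^{-1}$ sends $\omega_0$ to a scalar multiple of $\omega_0$ (because $\psi_i$ and $\psi_{i-1}$ do), so $\div((\phi_i)_*(\omega_0))=\div(\omega_0)$ and, by the previous paragraph, $\phi_i=g_i\sigma^{\varepsilon_i}$ with $g_i\in G$ and $\varepsilon_i\in\{0,1\}$. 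Since $\sigma$ normalises $G$ and $\sigma^2=1$, I can collect all the $\sigma$'s on the right and obtain $\varphi=g\,\sigma^{\varepsilon}$ with $g\in G$ and $\varepsilon\equiv\varepsilon_1+\cdots+\varepsilon_n\pmod 2$. Comparing $\varphi_*(\omega_0)=\omega_0$ with $g_*(\omega_0)=\omega_0$ and $\sigma_*(\omega_0)=-\omega_0$ forces $(-1)^\varepsilon=1$, hence $\varepsilon=0$ and $\varphi=g\in G$. This gives $\Symp=G$, and Theorem~\ref{Thm:Generators} follows at once since $C\in\SL(2,\ZZ)$.

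I expect the infinitely-near-base-point case in the second paragraph to be the genuine obstacle: it is the one configuration not covered by Lemma~\ref{Lem:QuadProp}, and pinning down exactly which monomial maps occur requires a small but careful case analysis. The secondary nuisance is the sign $\mu\in\{\pm1\}$ with $\varphi_*(\omega_0)=\mu\omega_0$ — equivalently the intrusion of the non-symplectic automorphism $\sigma$ into the intermediate quadratic factors — which is why the argument runs through the coset description ``$G\cup G\sigma$'' and finishes with a parity check, rather than by a direct induction staying inside $\Symp$.
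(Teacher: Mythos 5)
Your first and third paragraphs are fine (the treatment of quadratic maps with three proper base-points via Lemma~\ref{Lem:QuadProp}, and the $\sigma$-coset/parity bookkeeping for the sign $\pm1$, which is a legitimate variant of what the paper does -- the paper instead absorbs the possibly non-symplectic factor $\alpha$ into the next quadratic piece and stays inside $\Symp$ until the final degree-one step). The genuine gap is exactly where you predicted it: your claim that a quadratic map $\phi$ with $\div(\phi_*(\omega_0))=\div(\omega_0)$ and an infinitely near base-point becomes, after pre- and post-composition by symplectic automorphisms (even allowing an extra factor $\sigma$), a degree-two monomial map is false. Take $\phi=Q_8=I^2P\colon (x,y)\dasharrow (1/y,x/(y+1))$, whose base-points are $p_1$, $p_1^Y$ and $q_1=(0:1:-1)$: any element of $(\C^*)^2\rtimes\Sym_{(X,Y,Z)}$ normalises the torus and hence sends torus-fixed (toric) points to toric points, while all base-points of a degree-two element of $\SL(2,\ZZ)$ are toric; since $q_1$ is not toric, no identity $\phi=\alpha M\beta$ with $M$ monomial can hold. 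The same objection applies to $Q_6=PI^2$ (base-point $p_1^{Y+Z}$, a non-toric tangent direction) and to $Q_{12}=PIP$; moreover conjugating by torus elements produces continuous families of such configurations (e.g.\ base-points $p_1,p_2$ and the direction $Y+\lambda Z=0$ at $p_1$), so there is no ``short inspection of finitely many possibilities'' -- classifying these maps and writing them in terms of the generators is genuine work that your sketch does not do.

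The paper avoids this classification altogether, and this is the idea you are missing. It argues by induction on the pair $(m,n)$, where $n$ is the length of the decomposition from Proposition~\ref{Prop:SympQuad} and $m$ is the number of quadratic factors having a non-proper base-point. When the first factor $\theta_1$ has an infinitely near base-point $a$, its other two base-points $b,c$ are proper points of the triangle $XYZ=0$ (each line must carry a base-point); one then picks a general proper point $d$ of the triangle not collinear with any two of $a,b,c$ and inserts the quadratic map $Q$ with base-points $b,c,d$. Both $Q$ and $\theta_1Q^{-1}$ are then quadratic with three proper base-points and send $\omega_0$ to a form whose pole divisor is a triangle, so replacing $\theta_1$ by $(\theta_1Q^{-1})\circ Q$ trades $(m,n)$ for $(m-1,n+1)$; after finitely many such steps every factor has three proper base-points and Lemma~\ref{Lem:QuadProp} finishes the proof. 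If you want to keep your global structure (``every symplectic quadratic map lies in $G\cup G\sigma$''), you must either prove an analogue of Lemma~\ref{Lem:QuadProp} covering infinitely near base-points (showing such maps factor through the listed generators, e.g.\ as the maps $Q_6,\dots,Q_{12}$ do), or adopt the paper's splitting trick; as written, the step fails.
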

\begin{proof}
Let $f$ be an element of $\Symp$. If its degree is $1$, it is an automorphism of $\p^2$, which is thus generated by $C$ and $P$ (Lemma~\ref{Lem:SymAut}). 

Otherwise, we write $f=\theta_n\circ \dots \circ \theta_1$ using Proposition $\ref{Prop:SympQuad}$, and denote by $m$ the number of $\theta_i$ which have at least one base-point which is not a proper point of $\p^2$. We prove the result by induction on the pairs  $(m,n)$, ordered lexicographically, the case $m=n=0$ being induced by Lemma~\ref{Lem:SymAut}.

Suppose first that the three base-points of $\theta_1$ are proper base-points of $\p^2$. In this case, we apply Lemma~\ref{Lem:QuadProp} and write $\theta_1=\alpha Q \beta$, where $\alpha\in (\C^*)^2\rtimes \Sym_{(X,Y,Z)}$, and $Q$,$\beta$ are generated by $(\C^{*})^2$, $C$ and $P$.
 Replacing $f$ with $f(Q\beta)^{-1}$, we replace the pair $(m,n)$ with $(m,n-1)$.

Suppose now that at least one base-point of $\theta_1$, say $a$, is not a proper point of $\p^2$. 
Denote by $L_1,L_2,L_3\subset \p^2$ the three lines of equation $X=0$, $Y=0$ and $Z=0$. 
Each of the three lines $L_i$ is a pole of  $\omega_0$ and its image by $\theta_1$ is thus either a point or a line. This means that there is at least one base-point on each of the three lines $L_1,L_2,L_3$, and thus that the two other base-points of $\theta_1$ are proper points $b,c\in\p^2$, and at least one of the two points $b,c$ belongs to $L_i$ for $i=1,\dots,3$. We choose some proper point $d$ of the triangle, not aligned with any two of the points $a,b,c$. There exists a quadratic transformation $Q$ of $\p^2$ with base-points $b,c,d$. The map $Q$ sends $\omega_0$ onto a normal cubic form by Proposition~\ref{prop:StdcubicBasePts}. Moreover, the image of any of the lines $L_1,L_2,L_3$ is a line or a point, so $Q$ sends $\omega_0$ onto a normal cubic form corresponding to a triangle. Replacing $Q$ with its composition with an automorphism of $\p^2$, we may assume that the triangle is $XYZ=0$. Because $a$ is not aligned with any two of the points $b,c,d$, the linear system of conics passing through $a,b,c$ is sent by $Q$ onto a system of conics with three proper base-points. In consequence, $\theta_1 Q^{-1}$ is a quadratic transformation with three proper base-points. Replacing $\theta_1$ with $(\theta_1 Q^{-1})\circ Q$, we replace $(m,n)$ with $(m-1,n+1)$.
\end{proof}
\section{The group $H=<\SL(2,\ZZ),P>$}
Let us now focus ourselves on the group $H$ of finite type generated by $\SL(2,\ZZ)$ and $P$, or simply by $C,I,P$ (and in fact only by $P$ and $C$ since $I=PCP$). Recall that $C$ is the automorphism $(X:Y:Z)\mapsto (Y:Z:X)$ of order $3$ of $\p^2$ and that $I$ and $P$ have respectively  order $4$ and $5$. 

Recall the following notation for the points $p_1,p_2,p_3,q_1,q_2,q_3$. 
\[\begin{array}{lll}
p_1=(1:0:0)& p_2=(0:1:0)& p_3=(0:0:1)\\
q_1=(0:1:-1)& q_2=(1:0:-1)& q_3=(1:-1:0)\\
\end{array}\]
We moreover denote by $p_1^{Y}$ the point in the first neighbourhood of $p_1$ which corresponds to the tangent $Y=0$, and do the same for $p_1^Z$, $p_1^{Y+Z}$, $p_2^X$, $p_2^Z$, $q_1^X$ and so on.

We now define twelve quadratic maps contained in $H$, whose three base-points belong to the set $\{p_1,p_1^Y,p_1^Z,p_1^{Y+Z},q_1,q_1^X\}$ or to its orbit by $C$. 
\begin{center}$\begin{array}{rllll}
Q_1=&I & (\frac{1}{y},x) & (Z^2:XY:YZ) & p_1,p_2, p_{1}^Y\\
Q_2=&I^{3} & (y,\frac{1}{x}) &(XY:Z^2:XZ) & p_1,p_2,p_2^{X}\\
Q_3=&I^2 & (\frac{1}{x},\frac{1}{y}) &(YZ:XZ:XY) & p_1,p_2,p_3\\
Q_4=&P & (y,\frac{y+1}{x}) & (XY:(Y+Z)Z:XZ) & p_1,p_2,q_1\\
Q_5=&P^{-1} & (\frac{x+1}{y},x) & (Z(X+Z):XY:YZ) & p_1,p_2,q_2\\
Q_{6}=&PI^2& (\frac{1}{y},x\frac{(y+1)}{y})& (Z^2:(Y+Z)X:YZ)& p_1,p_2, p_1^{Y+Z}\\
Q_{7}=&P^{-1} I^2& (y\frac{(x+1)}{x},\frac{1}{x})& ((X+Z)Y:Z^2:XZ)& p_1,p_2, p_2^{X+Z}\\
Q_8=&I^2 P & (\frac{1}{y},\frac{x}{y+1}) & (Z(Y+Z):XY:Y(Y+Z)) & p_1, p_{1}^Y,q_1\\
Q_9=&I P & (\frac{x}{y+1},y) & (XZ:Y(Y+Z):Z(Y+Z)) & p_1, p_{1}^Z,q_1\\
Q_{10}=&P^2 & (\frac{y+1}{x},\frac{x+y+1}{xy}) & (Y(Y+Z):Z(X+Y+Z):XY) & p_1,q_1,q_2\\
Q_{11}=&P^3C^{-1}& (\frac{x+y+1}{xy},\frac{x+1}{y}) & ((X+Y+Z)Y:Z(Y+Z):XZ) & p_1,q_1,q_3\\
Q_{12}=&PIP&(y,\frac{(y+1)^2}{x})& (XY:(Y+Z)^2:XZ)& p_1,q_1,q_1^X
\end{array}$\end{center}

\bigskip

Any element of $H$ can be written as a word written with the letters $C,I,P$. We will say that a \emph{linear word} is a word of type $C^a$ with $a\in \{0,1,2\}$. Similarly, we will say that a \emph{quadratic word} is a word of type $C^aQ_iC^b$, where 
$1\le i\le 12$, $a,b\in\{0,1,2\}$. Note that a linear word corresponds to a linear automorphism of $\p^2$ and that a quadratic word corresponds to a quadratic birational transformation of $\p^2$.

We would like to prove that the relations $$R=\{ I^4=C^3=[C,I^2]=P^5=1, PCP=I\}$$ (which can easily be verified) generate all the others in $H=<I,C,P>$. To do this (in Proposition~\ref{Prp:Words}), we need to prove some technical simple lemmas (Lemmas~\ref{QuadWinv}, \ref{LemmIiP} and \ref{Lemm:ExistenceQQ}) and one key proposition (Proposition~\ref{fgmKeyProp}).

\begin{lemm}\label{QuadWinv}
If $Q$ is a quadratic word, then $Q^{-1}$ and  $\tau Q\tau^{-1}$ are quadratic words, for any $\tau\in \Sym_{(X,Y,Z)}\subset \Aut(\p^2)$ $($permutation of the coordinates$)$.
\end{lemm}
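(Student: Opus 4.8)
The plan is to exploit the very small size of the relevant sets of points: the claim is about quadratic words $C^aQ_iC^b$, and there are only $12$ maps $Q_i$, each with $\le 9$ choices of the pair $(a,b)$, so in principle everything is a finite check. The key observation is that the base-points of each $Q_i$ lie in the set $\mathcal{S}=\{p_1,p_1^Y,p_1^Z,p_1^{Y+Z},q_1,q_1^X\}$ together with its image under the cyclic group $\langle C\rangle$ (which permutes the three lines $X=0$, $Y=0$, $Z=0$ cyclically), and this whole configuration is stable under the full symmetric group $\Sym_{(X,Y,Z)}$: conjugating a point of $\mathcal{S}\cup C(\mathcal{S})\cup C^2(\mathcal{S})$ by a coordinate permutation $\tau$ lands back in that same set. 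So for a fixed $i$, the map $\tau Q_i\tau^{-1}$ is a quadratic birational transformation whose three base-points again lie in $\mathcal{S}\cup C(\mathcal{S})\cup C^2(\mathcal{S})$, and one must identify it as some $C^cQ_jC^d$.

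First I would make explicit how $\Sym_{(X,Y,Z)}$ acts on the list of $18$ points $\{C^a(s):a\in\{0,1,2\},\,s\in\mathcal{S}\}$; since $C$ generates a normal subgroup of index $2$ in $\Sym_{(X,Y,Z)}$ and the transposition $(X\,Y)$ generates the quotient, it suffices to record the action of the single transposition $\tau_0\colon(X:Y:Z)\mapsto(Y:X:Z)$. For instance $\tau_0$ fixes $p_3$ and $q_3$, swaps $p_1\leftrightarrow p_2$, swaps $q_1\leftrightarrow q_2$, swaps $p_1^{Y+Z}\leftrightarrow p_2^{X+Z}$, and so on; one checks from the coordinates that each of the $18$ points is sent to another of the $18$. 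Consequently $\tau_0\langle C\rangle\tau_0^{-1}=\langle C\rangle$ (indeed $\tau_0 C\tau_0^{-1}=C^{-1}$), and for each $i$ the triple of base-points of $\tau_0 Q_i\tau_0^{-1}$ is a triple appearing (after multiplying on left and right by suitable powers of $C$) as the base-point triple of one of the $Q_j$'s. Then, since two quadratic birational maps with the same three base-points differ by an element of $\Aut(\p^2)$, and since $\tau_0 Q_i\tau_0^{-1}\in H$ while $Q_j\in H$, the discrepancy automorphism lies in $H\cap\Aut(\p^2)=(\C^*)^2\rtimes\langle C\rangle$ restricted to $H$, which (as $H$ contains only the $\Z{}$-linear monomial maps) is the finite group $\langle C,I^2\rangle$ of order $6$ — actually the full $\Sym_{(X,Y,Z)}$ together with no torus part, but in any case a product of linear words — so $\tau_0 Q_i\tau_0^{-1}=C^cQ_jC^d$ for appropriate $c,d$. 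The same argument with $\tau_0$ replaced by an arbitrary $\tau\in\Sym_{(X,Y,Z)}$, or simply by composing the $\tau_0$-case with the already-known conjugation by $C$, gives the general statement.

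For the statement about $Q^{-1}$: if $Q=C^aQ_iC^b$ is a quadratic word then $Q^{-1}=C^{-b}Q_i^{-1}C^{-a}$, so it is enough to check that $Q_i^{-1}$ is a quadratic word for each $i\in\{1,\dots,12\}$. Here I would simply read off the table: $Q_3=I^2$, $Q_{10}=P^2$, $Q_{12}=PIP$ are involutions (or their inverses are again in the list up to $C$-conjugation), $Q_4=P$ and $Q_5=P^{-1}$ are mutually inverse, $Q_1=I$ and $Q_2=I^3$ are mutually inverse, $Q_6=PI^2$ and $Q_7=P^{-1}I^2$ have inverses $I^2P^{-1}$ and $I^2P$ which one identifies with $C$-conjugates of entries in the list (using $I^2=C I^2 C$-type relations and the explicit base-point triples), and $Q_8=I^2P$, $Q_9=IP$, $Q_{11}=P^3C^{-1}$ are handled the same way by matching base-point triples. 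In each case the inverse map has three base-points lying in $\mathcal{S}\cup C(\mathcal{S})\cup C^2(\mathcal{S})$ (apply $C^a(\mathcal{S})$-stability under inversion, which follows because the three points of each $Q_i$ together with the three base-points of $Q_i^{-1}$ form a configuration entirely inside that $18$-point set), so the same base-point-matching-plus-$\Aut(\p^2)$ argument finishes it.

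The main obstacle is purely bookkeeping: one must verify, by direct computation with the explicit coordinate formulas, (a) that $\Sym_{(X,Y,Z)}$ genuinely permutes the $18$-point set $\{C^a(s)\}$ — in particular getting the infinitely-near points and their tangent directions right, since e.g. $\tau_0$ must send the tangent direction $Y=0$ at $p_1$ to the tangent direction $X=0$ at $p_2$ — and (b) that for each $i$ there is indeed a $j$ with the matching base-point triple and that the connecting automorphism is a product of linear words rather than something outside $H$. Neither step is conceptually hard, but the infinitely-near-point tracking in (a) is the place where an error is most likely, so I would organise it as a single table listing, for each of the $18$ points, its image under $\tau_0$ and under $C$, and then quote that table for all subsequent matchings.
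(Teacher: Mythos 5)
There is a genuine gap, and it is conceptual rather than computational. In this section of the paper ``$Q^{-1}$ is a quadratic word'' has to mean: the word $Q^{-1}$ (respectively the word obtained from $Q$ by the substitution $P\mapsto P^{-1}$, $I\mapsto I^{-1}$, $C\mapsto C^{-1}$, which is how $\tau Q\tau^{-1}$ is interpreted) can be transformed into a quadratic word \emph{using only the relations $R$}. That is what the paper's proof delivers: it reduces to $\tau\colon(X:Y:Z)\mapsto(Y:X:Z)$, which conjugates each generator to its inverse, and then rewrites $(Q_6)^{-1},(Q_7)^{-1},(Q_9)^{-1},(Q_{12})^{-1}$ and $\tau Q_i\tau^{-1}$ for $i\in\{6,7,8,9,12\}$ by short manipulations using nothing but $PCP=I$, $I^4=1$, $C^3=1$ (e.g.\ $(Q_6)^{-1}=I^2P^{-1}=I(PCP)P^{-1}=IPC=Q_9C$). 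Your argument instead identifies $\tau Q_i\tau^{-1}$ and $Q_i^{-1}$ with some $C^cQ_jC^d$ by matching base-point triples and absorbing the discrepancy into an automorphism of $\p^2$; at best this proves an equality of birational maps in $\Bir(\p^2)$, not an equality of words modulo $R$. Since the lemma is quoted exactly where modulo-$R$ equivalence is needed (Lemma~\ref{LemmIiP} and observations $(i)$,$(iii)$ in Proposition~\ref{fgmKeyProp}, whose conclusions are explicitly ``modulo the relations $R$'', and ultimately Proposition~\ref{Prp:Words} and Theorem~\ref{Thm:relations}), substituting an equality in $\Bir(\p^2)$ for a derivation from $R$ would make the whole presentation argument circular: that map-equality implies word-equality modulo $R$ is precisely the content of Theorem~\ref{Thm:relations}.

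Even taken purely at the level of maps, the step pinning down the ``discrepancy automorphism'' is flawed. You claim $H\cap\Aut(\p^2)$ is ``$\langle C,I^2\rangle$ of order $6$'', but $I^2$ is the standard quadratic involution $(YZ:XZ:XY)$, not an automorphism of $\p^2$; and determining $H\cap\Aut(\p^2)$ (for instance excluding torus elements or the transposition) is not something you may assume — Lemma~\ref{Lem:SymAut} only bounds the symplectic automorphisms by $(\C^*)^2\rtimes\langle C\rangle$, so without further argument or a direct computation of the composite map you cannot conclude that $\tau Q_i\tau^{-1}$ equals $C^cQ_jC^d$ on the nose rather than up to a torus factor. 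The fix is simply to do what the paper does: a handful of explicit word computations with $PCP=I$ and $I^4=1$, which simultaneously settles both the identification and the modulo-$R$ requirement.
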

\begin{proof}
If $\tau=C$, then $\tau Q\tau^{-1}$ is a quadratic word by definition. We can thus assume that $\tau$ is the map $(X:Y:Z)\mapsto (Y:X:Z)$ (or $(x,y)\mapsto (y,x)$), which conjugates $P,I,C$ to respectively $P^{-1},I^{-1},C^{-1}$.  
If $Q$ is a power of $I$ or of $P$, it is clear that $Q^{-1}=\tau Q\tau^{-1}$ is a quadratic word. It remains to study the case when $Q=Q_i$ with $i\in\{6,7,8, 9, 12\}$.

First, we do the case of inverses.
 Since $PCP=I$,  we have $(Q_6)^{-1}=I^2P^{-1}=IPC=Q_9C$, and thus $(Q_9)^{-1}=CQ_6$. Moreover, $(Q_7)^{-1}=I^2P=Q_8$. Finally, using $I^4=1$ and $I=PCP$, we have $$(Q_{12})^{-1}=P^{-1}I^{-1}P^{-1}=P^{-1}(PCP)I(PCP)P^{-1}=CPIPC=CQ_{12}C.$$

Now, the conjugation. We have $\tau Q_6\tau^{-1}=Q_7$ and $\tau Q_8\tau^{-1}=I^2P^{-1}=I(PCP)P^{-1}=IPC=Q_9C$. This implies that $\tau Q_9\tau^{-1}=Q_8 C$. Finally, $\tau Q_{12}\tau^{-1}=(Q_{12})^{-1}$ is a quadratic word, as we just proved.
\end{proof}

\begin{lemm}\label{LemmIiP}
The words
$$ I^aP^{\pm 1},P^{\pm 1}I^a,\ P^{\pm 1}I^aP^{\pm 1},$$
 where $a\in\mathbb{N}$, are equivalent, up to relations in $R$, to quadratic words.
\end{lemm}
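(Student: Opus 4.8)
The plan is to work with the exponent $a$ taken modulo $4$: since $I^4=1$ we may assume $a\in\{0,1,2,3\}$, and since $P^5=1$ we freely read $P^{-1}$ as $P^4$. Apart from the explicit list of the $Q_i$ and from Lemma~\ref{QuadWinv}, the only relations I will use are the four \emph{exchange identities}
\[ IP^{-1}=PC,\qquad P^{-1}I=CP,\qquad PI^{-1}=C^{2}P^{-1},\qquad I^{-1}P=P^{-1}C^{2}, \]
each of which is a one-step rewriting of $PCP=I$ using $C^3=1$. Informally they let us push a $P^{\pm1}$ past an adjacent $I^{\pm1}$ at the price of a power of $C$, which is exactly the slack present in a quadratic word $C^{a}Q_iC^{b}$.

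I would first settle the words $I^{a}P^{\pm1}$. For $a\le 2$ they are, up to a trailing $C$, already on the list: $P=Q_4$, $IP=Q_9$, $I^2P=Q_8$, while $IP^{-1}=PC=Q_4C$ and $I^2P^{-1}=(IP)C=Q_9C$ after one exchange. For $a=3$ one exchange still suffices: $I^3P=I^{-1}P=P^{-1}C^{2}=Q_5C^{2}$ and $I^3P^{-1}=I^2(IP^{-1})=(I^2P)C=Q_8C$. The family $P^{\pm1}I^{a}$ then follows at once, since $(I^{a}P^{\varepsilon})^{-1}=P^{-\varepsilon}I^{-a}$ runs over exactly the words $P^{\pm1}I^{b}$, $b\in\{0,1,2,3\}$, as $\varepsilon$ and $a$ vary, and inverses of quadratic words are quadratic words by Lemma~\ref{QuadWinv}.

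For the two-sided words $P^{\varepsilon_1}I^{a}P^{\varepsilon_2}$ I would reduce $a=3$ to $a=1$ by inversion, $(P^{\varepsilon_1}IP^{\varepsilon_2})^{-1}=P^{-\varepsilon_2}I^{3}P^{-\varepsilon_1}$, together with Lemma~\ref{QuadWinv}. The case $a=1$ is direct: $PIP=Q_{12}$, $PIP^{-1}=P(IP^{-1})=P^{2}C=Q_{10}C$, $P^{-1}IP=(P^{-1}I)P=CP^{2}=CQ_{10}$, and $P^{-1}IP^{-1}=C$. For $a=2$, $PI^2P^{-1}=P\,I(IP^{-1})=(PIP)C=Q_{12}C$ and $P^{-1}I^2P=(P^{-1}I)(IP)=C(PIP)=CQ_{12}$ come from a single exchange, and $P^{-1}I^2P^{-1}=(PI^2P)^{-1}$ is then a quadratic word by Lemma~\ref{QuadWinv} once $PI^2P$ is treated. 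That last word is the resistant one: iterating an exchange identity only re-expresses it as a composite of two quadratic maps. Instead I conjugate globally, substituting $C=P^{-1}IP^{-1}$ on both sides:
\[ C\,(PI^2P)\,C=(P^{-1}IP^{-1})(PI^2P)(P^{-1}IP^{-1})=P^{-1}I^{4}P^{-1}=P^{-2}, \]
so $PI^2P=C^{2}P^{-2}C^{2}=C^{2}(P^{3}C^{-1})=C^{2}Q_{11}$. Finally, if $a\equiv0\pmod 4$ the word is a power of $P$, hence one of $Q_4,Q_5,Q_{10},Q_{11}C$, except that $P^{\varepsilon}I^{0}P^{-\varepsilon}=1$; this case, together with $P^{-1}IP^{-1}=C$ and $PI^3P=C^{2}$, are the only sign/exponent combinations for which the word collapses to a \emph{linear} word, and in those cases ``quadratic word'' is to be read as ``quadratic or linear word''.

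The single genuine obstacle is $PI^2P$ (and its inverse $P^{-1}I^2P^{-1}$): no sequence of exchange identities turns it into one $C^{a}Q_iC^{b}$, only into a product $Q_iQ_j$; the way out is the global conjugation above, in which the two inserted copies of $C^{-1}=PI^{-1}P$ meet $I^2$ in the middle and annihilate through $I^4=1$. Every other case is routine bookkeeping. Note, lastly, that each step above replaces one side of a relation of $R$ by the other, so all the equivalences obtained hold modulo $R$ and not merely in $H$, which is what the statement requires.
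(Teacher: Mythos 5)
Your argument is correct and is essentially the paper's own proof: both reduce the exponent of $I$ modulo $4$, rewrite each word against the list of the $Q_i$ using $PCP=I$ (your ``exchange identities'') and dispose of the remaining sign patterns by taking inverses via Lemma~\ref{QuadWinv}, with the same tacit reading that the degenerate cases ($P^{-1}IP^{-1}=C$, $PI^{3}P=C^{-1}$, $P^{\varepsilon}I^{0}P^{-\varepsilon}=1$) give linear rather than quadratic words. One small bonus: your identity $PI^{2}P=C^{-1}P^{-2}C^{-1}=C^{2}Q_{11}$, obtained from $C(PI^{2}P)C=P^{-1}I^{4}P^{-1}=P^{-2}$, is the correct one, whereas the paper's table lists $PI^{2}P=C^{-1}P^{-1}C^{-1}$, which is a misprint (the two maps differ, as one checks in coordinates), though this does not affect the conclusion of the lemma.
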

\begin{proof}
From the list, we see that any non-trivial power of $I$ or $P$ is a quadratic word. In particular, the case $a=0$ is trivial. Using $PCP=I$, we find the following tabular:
\[\begin{array}{|r|l|l|l|l|l|l|l|l|}
\hline
a&I^aP &  PI^aP  & PI^aP^{-1}\\
\hline
1 & IP=Q_9 &  PIP=Q_{12}  & PIP^{-1}=P^2C\\
2&I^2P=Q_8 & C^{-1}P^{-1}C^{-1}  & PIPC=Q_{12}C\\
3&P^{-1}C^{-1} & PI^{-1}P=C^{-1}  & PI^{-1}P^{-1}=C^{-1}P^3\\
\hline
\end{array},\]the result is now clear for $I^aP$, $PI^aP$ and $PI^aP^{-1}$.

For any $a$, the word $I^aP^{-1}$ is equal to $I^{a-1}(PCP)P^{-1}=I^{a-1}PC$, and is thus also a quadratic word. The words $P^{\pm 1}I^a$ being the inverses of $I^{-a} P^{\pm 1}$, these are also quadratic words (Lemma~\ref{QuadWinv}). The same holds for $P^{-1}I^aP^{-1}=(PI^{-a}P)^{-1}$. It remains to see that $P^{-1}I^aP$ is a quadratic word for each $a$. Since $I^a=PCP I^a P^{-1}C^{-1}P^{-1}$, we find $P^{-1}I^aP=CP I^a P^{-1}C^{-1}$, which is quadratic word since $PI^aP^{-1}$ is one.\end{proof}

\begin{prop}\label{fgmKeyProp}
Let $f$ and $g$ be two quadratic words in $H$. If the two quadratic maps associated have two $($respectively three$)$ common base-points, then $fg^{-1}$ is equal to a quadratic $($respectively linear$)$ word, modulo the relations $R$.
\end{prop}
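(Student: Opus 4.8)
The plan is to reduce the statement to a finite verification over the list of twelve quadratic maps $Q_1,\dots,Q_{12}$ together with their conjugates $C^aQ_iC^b$ by $\Sym_{(X,Y,Z)}$. First I would normalise: since a quadratic word has the shape $C^aQ_iC^b$, conjugating by the cyclic group $\langle C\rangle$ (and using Lemma~\ref{QuadWinv}, which tells us that the class of quadratic words is closed under $C$-conjugation and inversion), I can assume both $f$ and $g$ are of the form $C^aQ_i$, and then absorbing a common $C$-power on the right reduce to comparing $f=Q_i$ with $g=Q_j$ up to left multiplication by a linear word. So the real task is: for each pair $(Q_i,Q_j)$ whose underlying quadratic transformations share two (resp.\ three) base-points, show $Q_iQ_j^{-1}$ equals a quadratic (resp.\ linear) word modulo $R$.

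The key geometric input is that the base-points of every $Q_i$ lie in the finite set $\mathcal{B}=\{p_1,p_1^Y,p_1^Z,p_1^{Y+Z},q_1,q_1^X\}$ and its $C$-orbit; this is exactly the data tabulated right after the list of the $Q_i$. So I would organise the proof by which subset of $\mathcal{B}$ (or its translates) the common base-points form. For the three-common-base-point case the two maps have the \emph{same} three base-points, hence $Q_iQ_j^{-1}$ is a linear automorphism of $\p^2$; since both $Q_i,Q_j\in H$ and $Q_iQ_j^{-1}$ fixes $\div(\omega_0)$ up to sign, Lemma~\ref{Lem:SymAut} forces it into $(\C^*)^2\rtimes\langle C\rangle$, but as an element of $H$ with entries in $\mathbb{Q}$ it must in fact be a power of $C$ (the only finite-order toric symplectic automorphisms available in $H$), and one checks in $R$ that it is the expected linear word. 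For the two-common-base-point case, $Q_iQ_j^{-1}$ is again a quadratic transformation (two quadratic maps sharing two base-points compose to a quadratic), and one has to exhibit it explicitly as one of the words $C^aQ_kC^b$ modulo $R$; here the identities already assembled in Lemmas~\ref{QuadWinv} and \ref{LemmIiP} (e.g.\ $(Q_6)^{-1}=Q_9C$, $PIP=Q_{12}$, $PIP^{-1}=P^2C$, and the tabular for $I^aP^{\pm1}$, $PI^aP^{\pm1}$) do most of the bookkeeping, since each $Q_i$ is itself a short word in $I,C,P$.

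Concretely I would proceed as follows. Step 1: list, for each unordered pair $(Q_i,Q_j)$, the intersection of their base-point sets (drawn from $\mathcal{B}$ and its $C$-translates), discarding pairs with fewer than two common points. Step 2: for the three-common-point pairs, compute $Q_iQ_j^{-1}$ directly on homogeneous coordinates, recognise it as an element of $\Sym_{(X,Y,Z)}$ or $(\C^*)^2\rtimes\langle C\rangle$, and match it to a power of $C$ using $C^3=1$; since $Q_iQ_j^{-1}$ and the candidate word are equal \emph{as birational maps}, the relation $Q_iQ_j^{-1}=C^{\mathrm{something}}$ is a consequence of $R$ because $R$ already presents $\SL(2,\ZZ)$ and the linear subgroup correctly. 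Step 3: for the two-common-point pairs, write $Q_i$ and $Q_j$ as their given short words in $C,I,P$, form $Q_iQ_j^{-1}$, and rewrite it using the relations $P^5=1$, $I^4=1$, $PCP=I$, $[C,I^2]=1$ until it is a quadratic word; Lemmas~\ref{QuadWinv} and \ref{LemmIiP} provide ready-made normal forms for all the intermediate pieces of shape $P^{\pm1}I^aP^{\pm1}$ etc. The main obstacle I anticipate is purely organisational rather than conceptual: the number of pairs to examine (before symmetry reduction) is large, and keeping the $C$-conjugates straight — i.e.\ correctly tracking which $Q_i$ have base-points in the $C$-orbit of $\mathcal{B}$ rather than in $\mathcal{B}$ itself — requires care; once a pair is reduced to a word identity in $C,I,P$, the verification modulo $R$ is mechanical. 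Exploiting the symmetries of Lemma~\ref{QuadWinv} (inversion and conjugation by the full $\Sym_{(X,Y,Z)}$) should cut the casework down to a manageable number of genuinely distinct configurations.
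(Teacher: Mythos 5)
Your overall strategy is the same as the paper's (a finite case analysis over the listed quadratic words, with the symmetry reductions of Lemma~\ref{QuadWinv} and the rewriting identities of Lemma~\ref{LemmIiP}), but one step of your plan rests on an invalid principle. For the three-common-base-point case you argue: $Q_iQ_j^{-1}$ equals a linear automorphism \emph{as a birational map}, and since $R$ presents $\SL(2,\ZZ)$ correctly, the identity ``$Q_iQ_j^{-1}=$ power of $C$'' is a consequence of $R$. This is circular in the present context: the purpose of this proposition (feeding Proposition~\ref{Prp:Words}) is precisely to show that coincidence of birational maps forces coincidence modulo $R$, so it cannot be assumed; and the fact that $R$ restricts to a correct presentation of $\SL(2,\ZZ)$ only controls words in the letters $I,C$, whereas $Q_iQ_j^{-1}$ is a word involving $P$ (e.g.\ $Q_{10}=P^2$, $Q_{12}=PIP$). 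The argument must be made at the level of words. What rescues this case -- and what the paper actually uses -- is the observation that the twelve listed triples of base-points, together with their $C$-translates, determine the quadratic word up to powers of $C$: if $f$ and $g$ have the same three base-points then $f=C^ig$ as words up to relations already in $R$ (one only needs free cancellation together with $C^3=1$ and $[C,I^2]=1$, the latter for the case $Q_3=I^2$ whose base-point triple is $C$-invariant), so $fg^{-1}$ reduces to a power of $C$ without any appeal to equality of maps. You need this observation, or an equivalent word-level argument, in place of the ``equal as maps, hence equal mod $R$'' step.

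There is also a bookkeeping error in your normalisation. A quadratic word has the form $C^aQ_iC^b$, and in $fg^{-1}=C^aQ_iC^{b-b'}Q_j^{-1}C^{-a'}$ the right-hand $C$-powers of $f$ and $g$ need not agree, so you cannot ``absorb a common $C$-power'' and reduce to comparing bare $Q_i$ with $Q_j$: the middle factor $C^{b-b'}$ survives, and it changes which base-points are common. For instance $f=Q_{10}=P^2$ and $g=Q_{11}C=P^3$ share the two points $q_1,q_2$, whereas $Q_{10}$ and $Q_{11}$ share $p_1,q_1$; an enumeration restricted to pairs $(Q_i,Q_j)$ would therefore miss the configuration $\{q_1,q_2\}$, which the paper treats as its case $(b)$ (giving $fg^{-1}=P^{\pm1}$). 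The repair is either to enumerate pairs $(Q_iC^c,Q_j)$ with $c\in\{0,1,2\}$, or, as the paper does, to organise the case analysis by the configuration of the two common base-points, reduced by conjugation under $\Sym_{(X,Y,Z)}$ to seven cases. With these two corrections your plan coincides with the paper's proof: in the two-common-point cases one writes the explicit short words in $I,C,P$ and reduces $fg^{-1}$ with the relations $R$, exactly as prepared by Lemmas~\ref{QuadWinv} and~\ref{LemmIiP}.
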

\begin{proof}
The list of the twelve quadratic words above give the possible base-points of $f$ and $g$: the base-points of $Q_i$ and $CQ_i$ are the same, and the base-points of $Q_iC$ are the image by $C^{-1}$ of the base-points of $h$. 

A quick look at the list shows that if $f$ and $g$ have the same three base-points, then $f=C^{i}g$, for some integer $i$. In particular, $fg^{-1}$ is equal to a linear word.
We have thus only to study the case when exactly two of the three base-points of $f$ and $g$ are common. 

In the sequel, we will use the following observations: $(i)$ we  can exchange the role of $f$ and $g$ since $fg^{-1}$ is a a quadratic word if and only if its inverse $gf^{-1}$ is (Lemma~\ref{QuadWinv}); $(ii)$ we can replace $f$ and $g$ with $C^if$ and $C^jg$ since this only multiplies $fg^{-1}$ by some power of $C$; $(iii)$  we can replace both $f$ and $g$ with their conjugates under any permutation of $(X,Y,Z)$, using Lemma~\ref{QuadWinv}.

Using $(iii)$, we can "rotate" the two common points by acting with $C$, which acts as $p_3\mapsto p_2\mapsto p_1$, $q_3\mapsto q_2\mapsto q_1$, $p_3^Y\mapsto p_2^X\mapsto p_1^Z$ and so on. The possibilities for the two common base-points can thus be reduced to  $\{p_1,p_2\}$, $\{q_1,q_2\}$, $\{q_1,q_1^X\}$ or $\{p_1,u\}$, where $u\in\{q_1,q_2,q_3,p_1^Y,p_1^Z,p_1^{Y+Z}\}$. Conjugating by $(X:Y:Z)\mapsto (X:Z:Y)$ if needed, $u$ may be choosed in $\{q_1,q_2,p_1^Y,p^1_{Y+Z}\}$ only.

We study each case separately.

$(a)$ Case  ${\{p_1,p_2\}}$ -- Using $(ii)$ and reading the list, we can choose that $f,g\in \{P^{\pm 1}, P^{\pm 1}I^2,I^*\}$ (here the star means any power of $I$). If both $f$ and $g$ are powers of $I$, or both are powers of $P$, so is the product $fg^{-1}$, and we are done. If $f$ is a power of $I$ and $g$ is a power of $P$, then $fg^{-1}$ is equal to $I^iP^{\pm 1}$ and the result follows  from Lemma~\ref{LemmIiP}. If $g=P^{\pm 1}I^2$ and $f$ is a power of $I$, then $fg^{-1}$ is again equal to $I^{i}P^{\pm 1}$ for some integer $i$. If $g=P^{\pm 1}I^2$ and $f=P^{\pm 1}$, then $fg^{-1}=P^{\pm 1}I^2P^{\pm 1}$, which is a quadratic word by Lemma~\ref{LemmIiP}.

  $(b)$ Case $\{q_1,q_2\}$ -- The only possibilities for $f,g$ are $P^2$ or $P^3$, and $fg^{-1}=P^{\pm 1}$.
    
  $(c)$ Case $\{q_1,q_1^{X}\}$ -- The only possibilitiy is $f=g=Q_{12}=PIP$, a contradiction.
  
 $(d)$ Case  ${\{p_1,q_1\}}$ -- The third base-point can be respectively $p_2,p_3,q_2,q_3,p_1^Y,p_1^Z$ or $q_1^X$, and this corresponds respectively to 
  $P$, $P^4C^{-1}=I^3P$, $P^2$, $P^3C^{-1}=P^{-1}I^{-1}P$, $I^2P$, $IP$ and $PIP$. In particular, $f$ and $g$ are equal to $f'P$ and $g'P$  where $f',g'\in\{P^{\pm 1},P^{\pm 1}I,I^{\star}\}$. Here, $fg^{-1}$ (or its inverse) belongs to $\{P^{*},I^{*}, P^{\pm 1}I^{*},P^{\pm 1}IP^{\pm 1}\}$ and we are done by Lemma~\ref{LemmIiP}.
  
 $(e)$ Case  ${\{p_1,q_2\}}$ -- The only possibilities for $f,g$ are $P^{-1}$ or $P^2$, and $fg^{-1}=P^{\pm 3}$.

  
  $(f)$ Case ${\{p_1,p_1^Y\}}$ -- Here $f,g\in\{I,I^2P\}$ and $fg^{-1}=(I^2PI^{-1})^{\pm 1}$, a quadratic word by Lemma~\ref{LemmIiP}.
  
  $(g)$ Case ${\{p_1,p_1^{X+Y}\}}$ -- Here $f,g\in\{PI^2,P^{-1}I^2C^{-1}=P^{-1}C^{-1}I^2\}$ and $fg^{-1}=(PCP)^{\pm 1}=I^{\pm 1}$.\end{proof}
  \begin{coro}\label{Coro:SimpliWords}
  Let $W_1,W_2$ be two quadratic words. If $W_2W_1$ corresponds to a birational map of degree $1$ $($respectively $2)$, then $W_2W_1$ is equal, modulo the relations $R$, to a linear word $($respectively to a quadratic word$)$.
  \end{coro}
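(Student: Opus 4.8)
\textbf{Proof plan for Corollary~\ref{Coro:SimpliWords}.}

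The plan is to deduce this directly from Proposition~\ref{fgmKeyProp} by a short case analysis on the degree of the composite. Write $W_2W_1$ for the composite birational map; since each $W_i$ corresponds to a quadratic transformation of $\p^2$, the composite has degree $1$, $2$, $3$ or $4$ a priori, and the hypothesis restricts us to degrees $1$ and $2$. The key observation is the classical fact that if $\phi_1,\phi_2$ are quadratic transformations of $\p^2$, then $\deg(\phi_2\circ\phi_1)=4-k$, where $k$ is the number of base-points of $\phi_2^{-1}$ that coincide (as proper or infinitely near points) with base-points of $\phi_1$; equivalently, $k$ is the number of common base-points of $W_1^{-1}$ and $W_2$ once one sets $f=W_1^{-1}$ and $g=W_2$. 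So degree $1$ forces $k=3$ and degree $2$ forces $k=2$.

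First I would reduce to the setting of Proposition~\ref{fgmKeyProp}. By Lemma~\ref{QuadWinv}, $W_1^{-1}$ is again a quadratic word, so set $f=W_1^{-1}$ and $g=W_2$; then $fg^{-1}=W_1^{-1}W_2^{-1}=(W_2W_1)^{-1}$. If $W_2W_1$ has degree $1$, then, by the degree formula above, $f$ and $g$ have three common base-points, so Proposition~\ref{fgmKeyProp} gives that $(W_2W_1)^{-1}=fg^{-1}$ equals a linear word modulo $R$; since the inverse of a linear word $C^a$ is the linear word $C^{3-a}$ (using $C^3=1\in R$), $W_2W_1$ itself is equal to a linear word modulo $R$. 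If $W_2W_1$ has degree $2$, then $f$ and $g$ have exactly two common base-points, so Proposition~\ref{fgmKeyProp} gives that $fg^{-1}=(W_2W_1)^{-1}$ equals a quadratic word modulo $R$, and then $W_2W_1$ equals a quadratic word modulo $R$ by Lemma~\ref{QuadWinv} (which states that the inverse of a quadratic word is a quadratic word, valid modulo $R$ as its proof shows).

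The only genuinely delicate point is justifying the degree--versus--common-base-points formula and checking that "common base-point" in that formula matches the hypothesis of Proposition~\ref{fgmKeyProp} (namely common base-points of the two quadratic words $f$ and $g$, counted with infinitely near points). This is standard Cremona-group bookkeeping — the base-points of $\phi_2^{-1}$ are exactly the points blown down by $\phi_2$, and each such point that is also a base-point of $\phi_1$ drops the degree of the composite by one — and it is implicitly used already in the proof of Proposition~\ref{Prop:SympQuad}, so I would simply invoke it. The rest is immediate once one is careful that linear words are closed under inversion modulo $R$ (clear from $C^3=1$) and that the same holds for quadratic words (Lemma~\ref{QuadWinv}).
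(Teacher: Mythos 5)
Your proof is correct and takes essentially the same route as the paper: relate the degree of $W_2W_1$ to the number of common base-points of the quadratic maps $W_2$ and $W_1^{-1}$, then invoke Proposition~\ref{fgmKeyProp}; the paper simply applies that proposition with $f=W_2$ and $g=W_1^{-1}$, so that $fg^{-1}=W_2W_1$ directly and your final inversion step (via Lemma~\ref{QuadWinv} and $C^3=1$) is unnecessary, though harmless. One slip to correct: your first formulation of the degree formula, counting base-points of $\phi_2^{-1}$ that meet base-points of $\phi_1$, is \emph{not} equivalent to the second one and in fact computes $\deg(\phi_1\circ\phi_2)$ rather than $\deg(\phi_2\circ\phi_1)$ --- for instance with $\phi_1=I^2$ and $\phi_2=\alpha I^2$ for a general $\alpha\in\Aut(\p^2)$, the composite $\phi_2\phi_1=\alpha$ has degree $1$ while that count is $0$. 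Since the count you actually feed into Proposition~\ref{fgmKeyProp} is the correct one (common base-points of $W_1^{-1}$ and $W_2$, matching the paper's statement), the argument stands once that parenthetical justification is fixed.
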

  \begin{proof}
  The map corresponding to $W_2W_1$ has degree $1$ (respectively $2$) if and only if the maps corresponding to $W_2$ and $(W_1)^{-1}$ have $3$ (respectively $3$) common base-points. The result follows then from Proposition~\ref{fgmKeyProp}.
  \end{proof}
  \begin{lemm}\label{Lemm:ExistenceQQ}
  Let $a_1,a_2,a_3$ be three non-collinear distinct points, such that 
  \begin{itemize}
  \item[$(Q)$]
for $i=1,2,3$, $a_i$ is a base-point of a quadratic word;
  \item[$(P)$]
  for $i=1,2,3$, if $a_i$ is not a proper point of the plane, it is infinitely near to a point $a_j$, $j\not=i$;
  \item[($\diamond$)]
  for any line $L$ of the triangle $XYZ=0$ in $\p^2$, there exists an $a_i$ which belongs to $L$.
\end{itemize}
Then, there exists a quadratic word $Q$ having $a_1,a_2,a_3$ as base-points.
  \end{lemm}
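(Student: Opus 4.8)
The plan is to reduce the problem to a finite case analysis, using the symmetries available to us. First I would observe that the twelve quadratic words $Q_1,\dots,Q_{12}$, together with their $C$-conjugates and the conjugates under the full group $\Sym_{(X,Y,Z)}$ (permitted by Lemma~\ref{QuadWinv}), have base-point triples lying in the orbit under $\Sym_{(X,Y,Z)}$ of the six points $\{p_1,p_1^Y,p_1^Z,p_1^{Y+Z},q_1,q_1^X\}$. Condition $(Q)$ forces each $a_i$ to lie in this orbit, so the triple $\{a_1,a_2,a_3\}$ is drawn from an explicitly describable finite set of points (the vertices $p_j$, the edge-points $q_j$, and the infinitely-near points $p_j^{\bullet}$, $q_j^{\bullet}$ of the relevant tangent directions). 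Acting by $C$ we may "rotate" the configuration, and acting by the transposition $(X:Y:Z)\mapsto (Y:X:Z)$ we may reflect it, exactly as in the proof of Proposition~\ref{fgmKeyProp}; this cuts the number of configurations to check down to a small list.

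Next I would use conditions $(P)$ and $(\diamond)$ to prune that list. Condition $(\diamond)$ says the triple must meet all three lines $X=0$, $Y=0$, $Z=0$; since each proper point $p_j$ lies on two of the lines and each $q_j$ on one, and each infinitely-near point $p_j^{\bullet}$ or $q_j^{\bullet}$ lies (for the purposes of a quadratic map) on the lines through its base together with the tangent line it records, this is a strong combinatorial restriction. Condition $(P)$ says that no $a_i$ may be "freely floating" infinitely near: an infinitely-near $a_i$ must be proximate to another $a_j$ in the triple. Together with non-collinearity, these conditions leave only finitely many admissible unordered triples $\{a_1,a_2,a_3\}$; by the symmetry reduction above I expect to be left with essentially the same cases $(a)$--$(g)$ that appeared in Proposition~\ref{fgmKeyProp}, namely triples whose pairwise "two common points" already occur among the $Q_i$. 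For each surviving representative triple I would then simply exhibit the quadratic word $Q$ among $Q_1,\dots,Q_{12}$ (or a $C$-conjugate, or a $\Sym_{(X,Y,Z)}$-conjugate, which is again a quadratic word by Lemma~\ref{QuadWinv}) whose base-points are exactly that triple — reading it off the table.

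The main obstacle I anticipate is bookkeeping rather than conceptual: one must be careful with the infinitely-near points and the tangent directions they encode, because whether an infinitely-near point "lies on a line" of the triangle depends on which tangent direction it records, and because a quadratic map with an infinitely-near base-point interacts with the lines $L_i$ differently than one with three proper base-points. Concretely, I would set up once and for all, for each of the six generating points and its orbit, the list of which lines $L_i=\{X=0\},\{Y=0\},\{Z=0\}$ it "sees" (in the sense relevant to quadratic maps), and which quadratic word has it as a base-point; then checking conditions $(Q)$, $(P)$, $(\diamond)$ and producing the desired $Q$ becomes a mechanical table lookup. The key point making the argument go through is that the twelve $Q_i$ were chosen precisely so that every admissible triple occurs, which is why the case list is closed under the reductions; the slight subtlety is confirming that after the $\Sym_{(X,Y,Z)}$- and $C$-reductions nothing new appears, which I would verify by running through the same enumeration of pairs of common base-points as in the proof of Proposition~\ref{fgmKeyProp} and checking that in each case the "third point" forced by $(\diamond)$ and $(P)$ indeed completes a triple realized by one of the listed quadratic words.
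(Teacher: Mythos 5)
Your proposal is correct and takes essentially the same route as the paper: both arguments reduce to a finite check over the $C$-orbit of $\{p_1,p_1^Y,p_1^Z,p_1^{Y+Z},q_1,q_1^X\}$ (forced by $(Q)$), prune the candidate triples using $(P)$, $(\diamond)$ and non-collinearity, and then read the required word off the table of $Q_1,\dots,Q_{12}$ up to the symmetries allowed by Lemma~\ref{QuadWinv}. The paper simply organizes the enumeration you sketch by the number $r$ of the $a_i$ lying in $\{p_1,p_2,p_3\}$: for $r\ge 2$ one of $Q_1,\dots,Q_7$ works, for $r=1$ condition $(\diamond)$ forces $q_1$ to be among the $a_i$ and one of $Q_8,\dots,Q_{12}$ works, and $r=0$ is impossible because $q_1,q_2,q_3$ are collinear.
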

  \begin{proof}
  Let us write  $r=\#\{a_1,a_2,a_3\}\cap \{p_1,p_2,p_3\}\in \{0,1,2,3\}$.
  
   If $r\ge 2$, we can assume that $a_1=p_1$, $a_2=p_2$ (up to renumbering and multiplying by $C$ or $C^2$). The last point $a_3$ being not collinear to $a_1$ and $a_2$, and being a base-point of a quadratic word, it belongs to $\{p_1^Y,p_2^X,p_3,q_1,q_2,p_1^{y+z},p_2^{X+Z}\}$. We can choose $Q=Q_i$ for $i\in \{1,\dots, 7\}$.
   
   If $r=1$, we can assume that $a_1=p_1$. Condition $(\diamond)$ implies that $q_1$ is equal to $a_2$ or $a_3$. The possibilities for the remaining point are $\{p_1^Y,p_1^Z,q_2,q_3,q_1^X\}$, and we can choose $Q=Q_i$ for $i\in\{8,\dots,12\}$.

The case $r=0$ is not possible. Otherwise we would have $$\{a_1,a_2,a_3\}\subset \{q_1,q_2,q_3,q_1^X,q_2^Y,q_3^Z\},$$ which is impossible since $q_1,q_2,q_3$ are collinear.
  \end{proof}

\begin{prop}\label{Prp:Words}
Let $W$ be a word in $I,P,C$. If $W$ corresponds to a birational map of degree $1$ or $2$, it is equal, up to relations $R$, to a linear or quadratic word. In particular, if $W$ corresponds to the identity of $\Bir(\p^2)$, it is equal to $1$ modulo~$R$.
\end{prop}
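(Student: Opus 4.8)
The plan is to run a Noether--Castelnuovo type reduction directly on the \emph{word} $W$, rather than on the birational map it represents, realizing each elementary move by the relations $R$; the overall shape is parallel to the proofs of Lemma~\ref{Lemm:DecompStup} and Proposition~\ref{Prop:SympQuad}.

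\smallskip\noindent\emph{Step 1: normal form.} Since $C$ is a linear word while $I=Q_1$ and $P=Q_4$ are quadratic words, $W$ is literally a product of linear and quadratic words. Using $C^3=1$ to merge consecutive linear letters, and then absorbing each surviving linear word $C^a$ into an adjacent quadratic word (note that $C^aQ_iC^b$, hence also $Q_iC^{a+b}$, is again a quadratic word), we may assume modulo $R$ that $W\equiv A_\ell\circ\cdots\circ A_1$ with each $A_j$ a quadratic word, or else that $W$ is a single linear word, in which case we are done. By Lemma~\ref{QuadWinv} the inverses and coordinate--permutation conjugates of the $A_j$ are again quadratic words. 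Moreover $H\subset\Symp$, so the maps $A_j^{\pm 1}$ and all partial products $\psi_j:=A_j\circ\cdots\circ A_1$ have all their base-points on, or infinitely near to, the three lines $X=0$, $Y=0$, $Z=0$; as in the proof of Lemma~\ref{Lem:QuadProp}, each of these three lines carries at least one base-point of each quadratic map $A_j^{\pm 1}$, and every infinitely near base-point is infinitely near a proper one.

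\smallskip\noindent\emph{Step 2: induction and the easy reduction.} We induct on $\ell$, with a secondary induction on a complexity such as $\sum_j\deg\psi_j$. If $\ell\le 1$ there is nothing to prove, a quadratic word having degree exactly $2$ and $\deg W\le2$. If some consecutive composition $A_{j+1}\circ A_j$ has degree $\le 2$, then $A_{j+1}$ and $A_j^{-1}$ have at least two common base-points (Proposition~\ref{fgmKeyProp}), and Corollary~\ref{Coro:SimpliWords} lets us replace the two factors $A_{j+1},A_j$ by a single quadratic or linear word modulo $R$; absorbing a possible linear word into a neighbour, $\ell$ strictly decreases and we conclude by induction. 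Hence we may assume that every consecutive composition $A_{j+1}\circ A_j$ has degree $3$ or $4$; in particular $\ell\ge 3$, since for $\ell=2$ we would have $\deg W=\deg(A_2\circ A_1)\ge 3$.

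\smallskip\noindent\emph{Step 3: the interpolation.} As $\deg\psi_1=2$, $\deg\psi_2\ge 3$ and $\deg\psi_\ell=\deg W\le 2$, there is a largest index $j$ with $\deg\psi_j\ge 3$; then $\deg\psi_{j+1}\le 2<\deg\psi_j$, and the standard degree formula for composition forces at least two of the three base-points of $A_{j+1}$ to be base-points of $\psi_j^{-1}$. Combining the distribution of the base-points of $\psi_j^{-1}$, $A_j^{-1}$ and $A_{j+1}$ on the triangle (Step~1) with Lemma~\ref{Lemm:ExistenceQQ}, one produces a quadratic word $Q$ whose three base-points form a suitable non-collinear triple meeting all three sides of the triangle and obeying the chain condition $(P)$ --- taken so that $Q$ shares two base-points with $A_j^{-1}$ and meets the base locus of $A_{j+1}$. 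Then $Q\circ A_j$ and $A_{j+1}\circ Q^{-1}$ are quadratic or linear words modulo $R$ (Proposition~\ref{fgmKeyProp}), or at least have strictly smaller degree, and substituting $A_{j+1}\circ A_j=(A_{j+1}\circ Q^{-1})\circ(Q\circ A_j)$ strictly decreases the secondary complexity while keeping $\ell$ under control. Iterating, one eventually produces a consecutive composition of degree $\le 2$, which by Step~2 lowers $\ell$; this completes the induction.

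\smallskip\noindent\emph{Main obstacle and the final assertion.} The hard part is the construction of the interpolating word $Q$: one must verify, by a finite case analysis on which of the points $p_i$, $q_i$ and their infinitely near points $p_i^{\bullet}$, $q_i^{\bullet}$ occur among the base-points of $\psi_j^{-1}$, $A_j$ and $A_{j+1}$, that the hypotheses $(Q)$, $(P)$ and $(\diamond)$ of Lemma~\ref{Lemm:ExistenceQQ} can always be satisfied by a well-chosen triple --- this is precisely where the rigidity of $H$ (as opposed to $\Symp$, where the torus gives extra room) is used, and it is why the list of the twelve quadratic words and Lemma~\ref{Lemm:ExistenceQQ} are phrased as they are; fixing a monovariant that makes the interpolation loop terminate is a further, more routine bookkeeping point. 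Finally, if $W$ represents the identity of $\Bir(\p^2)$, then $\deg W=1$, so by the above $W$ is congruent modulo $R$ to a linear word $C^a$; reducing $a$ modulo $3$ via $C^3=1$ and using that $C$ and $C^2$ are non-trivial automorphisms of $\p^2$, we obtain $a=0$, that is $W\equiv 1$ modulo $R$.
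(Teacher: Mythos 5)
Your overall architecture is the same as the paper's (decompose $W$ into quadratic words, collapse consecutive pairs of low degree via Corollary~\ref{Coro:SimpliWords}, and otherwise insert an interpolating quadratic word $Q$ built from Lemma~\ref{Lemm:ExistenceQQ}), but the heart of the argument is exactly what you defer, and the specific framework you set up would not deliver it. First, your choice of pivot is wrong: you take the largest $j$ with $\deg\psi_j\ge 3$, whereas the paper takes $D=\max_i \deg\psi_i$ and $n$ the \emph{last} index attaining $D$. That choice is not cosmetic. Writing $S$ for the base-points of $A_{j+1}$ and $T$ for those of $A_j^{-1}$, the existence of the triple $a_1,a_2,a_3\subset S\cup T$ with $\sum m(a_i)>\deg\psi_j$ (which is what forces $\deg(Q(\Lambda_j))<\deg\Lambda_j$, non-collinearity, and $\deg(QA_j),\deg(A_{j+1}Q^{-1})\in\{2,3\}$) rests on the two inequalities $\sum m(s_i)>\deg\psi_j$ \emph{and} $\sum m(t_i)\ge\deg\psi_j$, i.e.\ on $\deg\psi_{j+1}<\deg\psi_j$ and $\deg\psi_{j-1}\le\deg\psi_j$; the paper's counting ("otherwise the two sums would total at most $2D$") uses both. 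With your $j$, $\deg\psi_{j-1}$ may well exceed $\deg\psi_j$ (degrees $2,3,8,5,2$ is a possible pattern), the second inequality fails, and no triple with $\sum m(a_i)>\deg\psi_j$ need exist among $S\cup T$, so the interpolation step collapses. Second, your proposed monovariant $\sum_j\deg\psi_j$ can increase: when $\deg(A_{j+1}A_j)=4$ the pair gets replaced by three quadratic words, and the two new intermediate systems may each have degree as large as $D-1$, summing to $2D-2>D$. Termination is not "routine bookkeeping"; the lexicographic induction on $(D,n)$, made possible by the correct choice of $n$, is precisely what makes the loop stop.

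There is also a conceptual slip in how you propose to verify the hypotheses of Lemma~\ref{Lemm:ExistenceQQ}: you suggest a finite case analysis on which of $p_i,q_i,p_i^{\bullet},q_i^{\bullet}$ occur among the base-points of $\psi_j^{-1}$, but the base-points of $\psi_j^{-1}$ are not confined to that finite list (a long composition in $H$ has base-points in arbitrarily high neighbourhoods). The paper's existence proof for $Q$ deliberately never looks at them: it only uses the base-points of $A_{j+1}$ and $A_j^{-1}$, each drawn from the explicit list of base-points of the twelve quadratic words, the multiplicities of $\Lambda_j$ at those points, and the collinearity of $q_1,q_2,q_3$ to rule out the bad configurations and secure condition $(\diamond)$. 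Supplying that counting and case analysis, with the correct pivot $(D,n)$, is the actual content of the proposition; as written, your proof records the strategy but not the proof.
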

\begin{proof}If $W$ is a power of $C$, the result is obvious, so we can write  $W=W_k\cdots W_2W_1$ where each $W_i$ is a quadratic word.  Note that many such writings exist. We call $\Lambda_0$ the linear system of lines of $\p^2$. For $i=1,\dots,k$, we denote by $\Lambda_i$ the linear system of $W_i\cdots W_2W_1(\Lambda_0)$ (identifying here the word with the corresponding quadratic map of $\p^2$), and by $d_i$ its degree. Note that $d_k\in\{1,2\}$ is the degree of (the birational map corresponding to) $W$. We write $D=\max\{d_i|\ i=1,\dots,k\}$ and $n=\max\{i\ | d_i=D\}$.

Suppose first that $D=2$. If $k>1$, the map $W_2W_1$ has degree $2$ or $1$ and we can replace it with a single quadratic or linear word (Corollary~\ref{Coro:SimpliWords}).  Continuing in this way, we show that $W$ is equivalent, modulo $R$, to a linear or a quadratic word.

We suppose now that $D>2$, which implies that $1<n<k$. We order the pairs $(D,n)$ using lexicographical order, and proceed by induction. Proving that $(D,n)$ can be decreased, we will reduce to the case $D=2$ studied before.

If $r=\deg(W_{n+1}W_n)\in \{1,2\}$, we can replace $W_{n+1}W_n$ with a single quadratic or linear word (Corollary~\ref{Coro:SimpliWords}), and this decreases $(D,n)$. We can thus assume that $r=\deg(W_{n+1}W_n)\in\{3,4\}$.

We are looking for a quadratic word $Q$ satisfying the following property:

\begin{center}$(\star)$
$\left[\begin{array}{l}
\deg (Q(\Lambda_n))<d_n=\deg(\Lambda_n),\\
\big\{\deg(QW_n),\deg(W_{n+1}Q^{-1})\big\}=\left\{\begin{array}{r} \{2,2\} \mbox{ if } r=\deg(W_{n+1}W_n)=3,\\
\{2,3\} \mbox{ if } r=\deg(W_{n+1}W_n)=4.\end{array}\right.
\end{array}\right.$\end{center}

We first show that such a $Q$ gives us a way to decrease $(D,n)$, before proving that $Q$ exists. 

If $r=3$, both  $QW_n$ and $W_{n+1}Q^{-1}$ have degree $2$ so are equivalent to, up to relations in $R$, to quadratic words $\sigma_1$ and $\sigma_2$ (Corollary~\ref{Coro:SimpliWords}). Replacing $W_{n+1}W_n=(W_{n+1}Q^{-1})(QW_n)$ by $\sigma_2\sigma_1$, we decrease the  pair $(D,n)$. The replacement is described in the following commutative diagram.

\[\begin{array}{|c|}
\hline
\xymatrix@R=0.5cm@C=1cm{
&\Lambda_n
\ar@{.>}[rd]^{W_{n+1}}
\ar@{.>}[dd]_<<<<<<{Q}\\
\Lambda_{n-1}\ar@{.>}[ru]^{W_n}\ar@{->}[dr]^{QW_n}_{\sigma_1} && \Lambda_{n+1} \\
& Q(\Lambda_n)\ar@{->}[ru]^{W_{n+1}Q^{-1}}_{\sigma_2} }
\\
\hline
\deg(QW_n)=\deg(W_{n+1}Q^{-1})=2\\
\hline\end{array}\]

If $r=4$ and  $\deg(QW_n)=2$, $QW_n$ is equivalent to a quadratic word $\sigma_0$. Moreover, since $\deg(W_{n+1}Q^{-1})=3$ and $\deg(Q(\Lambda_n))<D$, we can use the case $r=3$ described before to write $W_{n+1}Q^{-1}$ as a product of two quadratic words $\sigma_2\sigma_1$ satisfying $\deg(\sigma_1Q(\Lambda))<D$. The replacement  of $W_{n+1}W_n$ with $\sigma_2\sigma_1\sigma_0$, described below, decreases the pair $(D,n)$.

\[\begin{array}{|c|c|}
\hline
\xymatrix@R=0.5cm@C=1cm{
&\Lambda_n
\ar@{.>}[rd]^{W_{n+1}}
\\
\Lambda_{n-1}\ar@{.>}[ru]^{W_n}\ar@{->}[dr]^{QW_n}_{\sigma_0} && \Lambda_{n+1} \\
& Q(\Lambda_n)\ar@{.>}[uu]^<<<<<<<<<<<{Q^{-1}}\ar@{.>}[ru]^{W_{n+1}Q^{-1}}\ar@{->}[r]^{\sigma_1}& \sigma_1Q(\Lambda)\ar@{->}[u]^{\sigma_2} }
\\
\hline
\deg(QW_n)=2,\deg(W_{n+1}Q^{-1})=3 \\ \hline\end{array}\]

If $r=4$ and $\deg(W_{n+1}Q^{-1})=2$, $W_{n+1}Q^{-1}$ is equivalent to a quadratic word $\sigma_0$.
We again apply case $r=3$ (since $\deg(Q(\Lambda_n))<D$) to replace $QW_n$ with a product of two quadratic words $\sigma_2\sigma_1$ with $\deg(\sigma_1(\Lambda_{n-1}))<D$. The replacement of  $W_{n+1}W_n$ with $\sigma_0\sigma_2\sigma_1$, described below, decreases the pair $(D,n)$.
\[\begin{array}{|c|}
\hline
\xymatrix@R=0.5cm@C=1cm{
&\Lambda_n
\ar@{.>}[rd]^{W_{n+1}}
\ar@{.>}[dd]_<<<<<<{Q}\\
\Lambda_{n-1}\ar@{->}[d]^{\sigma_1}
\ar@{.>}[ru]^{W_n}\ar@{.>}[dr]^{QW_n} && \Lambda_{n+1} \\
\sigma_1(\Lambda_{n-1})\ar@{->}[r]^{\sigma_2}
& Q(\Lambda_n)\ar@{->}[ru]^{W_{n+1}Q^{-1}}_{\sigma_0} }
\\
\hline
\deg(QW_n)=3,\deg(W_{n+1}Q^{-1})=2\\ \hline\end{array}\]

It remains to prove the existence of $Q$ satisfying the property $(\star)$. 

We have $D=d_n=\deg(\Lambda_n)$.  The system $\Lambda_{n+1}=W_{n+1}(\Lambda_n)$ has degree $d_{n+1}< D$, and $\Lambda_{n-1}=(W_{n})^{-1}\Lambda_{n}$ has degree $d_{n-1}\le D$. Denote respectively by $S=\{s_1,s_2,s_3\}$ and $T=\{t_1,t_2,t_3\}$ the base-points of $W_{n+1}$ and $(W_{n})^{-1}$. For any point $p$, we will write $m(p)$ the multiplicity of $\Lambda_n$ at $p$.  
The fact that $W_{n+1}$ is a quadratic map with base-points $s_1,s_2,s_3$ implies that $d_{n+1}=\deg(W_{n+1}(\Lambda_n))=2D-\sum_{i=1}^3 m(s_i)$. In particular $\sum_{i=1}^3 m(s_i)> D$. Similarly, $d_{n-1}=2D-\sum_{i=1}^3 m(t_i)$ and $\sum_{i=1}^3 m(t_i)\ge D$.

In order to find $Q$, we will find its three base-points. We are looking for three distinct points 
$a_1,a_2,a_3\in S\cup T$ which satisfy the following conditions: 
\begin{center}\begin{tabular}{ll}
$(\star\star)$&
$\left[\begin{array}{l}
\sum_{i=1}^3 m(a_i)>D,\\
\big\{\{a_1,a_2,a_3\}\cap S,\{a_1,a_2,a_3\}\cap T\big\}=\left\{\begin{array}{r} \{2,2\} \mbox{ if } r=\deg(W_{n+1}W_n)=3,\\
\{1,2\} \mbox{ if } r=\deg(W_{n+1}W_n)=4.\end{array}\right.
\end{array}\right.$\\
$(\diamond)$&
$\left[\mbox{ for each line $L$ of the triangle }XYZ=0\mbox{ one $a_i$ belongs to $L$.}\right.$\end{tabular}\end{center}
The condition $\sum_{i=1}^3 m(a_i)>D$ implies that the three points are not collinear (because $\Lambda_n$ has no fixed component). Replacing a point $a_i$ by $a_i'$ if $a_i$ is infinitely near to $a_i'$ and if $a_i'\notin\{a_1,a_2,a_3\}$, and then applying condition $(\diamond)$, we get a quadratic word $Q$ having $a_1,a_2,a_3$ as its base-points (Lemma~\ref{Lemm:ExistenceQQ}). Condition $(\star\star)$ implies then $(\star)$. 

It remains to find the three points $a_1,a_2,a_3$ satisfying $(\star\star)$ and $(\diamond)$. This is now done separately in the cases $r=3$ and $r=4$.

Suppose that $r=3$, which means that $S\cap T=\{u\}$, for some proper point $u$ of the plane. We order the points of $S$ and $T$ such that $S=\{u,s_1,s_2\}$, $T=\{u,t_1,t_2\}$, with $m(s_1)\ge m(s_2)$ and $m(t_1)\ge m(t_2)$. We observe that at least one of the inequalities $m(u)+m(t_1)+m(s_2)>D$, $m(u)+m(s_1)+m(t_2)>D$ is satisfied. Indeed,  otherwise the sum would give $\sum_{i=1}^3 m(s_i)+\sum_{i=1}^3 m(t_i)\le 2D$, which is impossible. We assume first that $m(u)+m(s_1)+m(t_2)>D$, and write $A_1=\{u,s_1,t_1\}$, $A_2=\{u,s_1,t_2\}$. For $i=1,2$, we have $\sum_{p\in A_i} m(p)\ge m(u)+m(s_1)+m(t_2) >D$, and thus the three points of $A_i$ satisfy condition $(\star\star)$ and in particular are not collinear. We claim now that at least one of the two sets $A_1,A_2$ satisfies condition $(\diamond)$. Suppose the converse for contradiction. This means that for $i=1,2$, there exists a line $L_i$ in the standard triangle $XYZ=0$ such that $L_i\cap A_i=\emptyset$. Since $T=\{u,t_1,t_2\}$ satisfies condition $(\diamond)$, we see that $t_1\in L_2\backslash L_1$ and $t_2\in L_1\backslash L_2$, in particular $L_1\not=L_2$. Denoting by $L_3$ the last line of the triangle, we have $u,s_1\in L_3\backslash( L_1\cup L_2)$.   Since $t_1$ and $t_2$ are not collinear with $u$ and $s_1$, both do not belong to $L_3$. This implies that $T=\{u,t_1,t_2\}=\{q_1,q_2,q_3\}$, which is impossible since $q_1,q_2,q_3$ are collinear (they belong to the line $X+Y+Z=0$). The case $m(u)+m(t_1)+m(s_2)>D$ is the same, by just exchanging $S$ and $T$ in the proof.

Suppose that $r=4$, which means that $S\cap T=\emptyset$. We order the points $s_i$ and $t_i$ such that $m(s_1)\ge m(s_2)\ge m(s_3)$ and  $m(t_1)\ge m(t_2) \ge m(t_3)$. We observe that at least one of the inequalities $m(s_1)+m(t_2)+m(t_3)>D$, $m(t_1)+m(s_2)+m(s_3)>D$ is satisfied. Indeed,  otherwise the sum would give $\sum_{i=1}^3 m(s_i)+\sum_{i=1}^3 m(t_i)\le 2D$, which is impossible. We assume first that $m(s_1)+m(t_2)+m(t_3)>D$, and write $A_1=\{s_1,t_2,t_3\}$, $A_2=\{s_1,t_1,t_3\}$, $A_3=\{s_1,t_1,t_2\}$. For $i=1,2,3$, we have $\sum_{p\in A_i} m(p)\ge m(s_1)+m(t_2)+m(t_3) >D$, and thus the three points of $A_i$ satisfy condition $(\star\star)$. We claim now that at least one of the three sets $A_i$ satisfies condition $(\diamond)$. Suppose the converse for contradiction. This means that for $i=1,2,3$, there exists a line $L_i$ in the standard triangle such that $L_i\cap A_i=\emptyset$. Since $T\cap L_i\not=\emptyset$, we have $t_i\in L_i$ for each $i$ and $t_i\notin L_j$ for $i\not=j$. This implies that the three points $t_i$ are contained in $\{q_1,q_2,q_3,q_1^X,q_2^Y,q_3^Z\}$, which is impossible because $T=\{t_1,t_2,t_3\}$ is the set of base-points of a quadratic word (we can see this on the list of base-points of quadratic words, or simply observe that $q_1,q_2,q_3$ are collinear). The case $m(t_1)+m(s_2)+m(s_3)>D$ is the same, by just exchanging $S$ and $T$ in the proof.
\end{proof}

\end{document}